\documentclass[a4paper,11pt]{article}

\usepackage[a4paper, margin={1in, 1in}]{geometry}
\usepackage{mathtools}
\usepackage{algorithmic}
\usepackage{algorithm}
\usepackage{amsthm}
\usepackage{amssymb}
\usepackage{amsmath}
\usepackage{enumitem}
\usepackage{float}
\usepackage{etoolbox}
\patchcmd{\thmhead}{(#3)}{#3}{}{}

\theoremstyle{plain}
\newtheorem{theorem}{Theorem}[section]

\newtheorem{open}[theorem]{Problem}

\newtheorem{prop}[theorem]{Proposition}
\newtheorem{lemma}[theorem]{Lemma}
\newtheorem{cor}[theorem]{Corollary}
\newtheorem{obs}[theorem]{Observation}

\theoremstyle{definition}

\theoremstyle{remark}

\newtheorem*{remark*}{Remark}




\begin{document}

\title{Efficient Removal Lemmas for Matrices}

\author{
Noga Alon\thanks{Sackler School of Mathematics
	and Blavatnik School of
	Computer Science, Tel Aviv University, Tel Aviv 69978, Israel.
	Email: {\tt nogaa@tau.ac.il}.  Research supported in part by a
	USA-Israeli
	BSF grant 2012/107, by an ISF grant 620/13 and
	by the Israeli I-Core program.} \and
Omri Ben-Eliezer\thanks{Blavatnik School of
Computer Science, Tel Aviv University, Tel Aviv 69978, Israel.
Email: {\tt omrib@mail.tau.ac.il.}}}

\maketitle
\begin{abstract}
The authors and Fischer recently proved that any hereditary property of two-dimensional
matrices (where the row and column order is not ignored) over a finite alphabet
 is testable with a constant number of queries, by establishing the following
(ordered) matrix removal lemma: For any finite alphabet $\Sigma$, any hereditary property
$\mathcal{P}$ of matrices over $\Sigma$, and any $\epsilon > 0$,
there exists $f_{\mathcal{P}}(\epsilon)$ such that for any matrix $M$ over $\Sigma$ that is $\epsilon$-far from satisfying $\mathcal{P}$, most of the $f_{\mathcal{P}}(\epsilon) \times f_{\mathcal{P}}(\epsilon)$ submatrices of $M$ do not satisfy $\mathcal{P}$.
Here being $\epsilon$-far from $\mathcal{P}$ means that one needs to modify at least an $\epsilon$-fraction of the entries of $M$ to make it satisfy $\mathcal{P}$.

However, in the above general removal lemma, $f_{\mathcal{P}}(\epsilon)$ grows very fast as a function of $\epsilon^{-1}$, even when $\mathcal{P}$ is characterized by a single forbidden submatrix.
In this work we establish much more efficient removal lemmas for several special cases of the above problem.
In particular, we show the following: For any fixed $s \times t$ binary matrix $A$ 
and any $\epsilon > 0$ there exists $\delta > 0$ polynomial in $\epsilon$, such that 
for any binary matrix $M$ in which less than a $\delta$-fraction of the $s \times t$ submatrices
are equal to $A$, there exists a set of less than an $\epsilon$-fraction of the entries of $M$ that intersects every $A$-copy in $M$.

We generalize the work of Alon, Fischer and Newman
[SICOMP'07] and make progress towards proving one of their conjectures.
The proofs combine their efficient conditional regularity lemma for matrices
 with additional combinatorial and probabilistic
ideas.
\end{abstract} 

\section{Introduction}
\label{sec:intro}
\emph{Removal lemmas} are structural combinatorial results that relate the \emph{density} of ``forbidden'' substructures in
a given large structure $S$ with the \emph{distance} of $S$ from not containing any of the forbidden substructures, stating that if $S$ contains a small number of forbidden substructures, then one can make $S$ free of such substructures by making only a small number of modifications in it. 
Removal lemmas are closely related to many problems in Extremal Combinatorics, and have direct implications in Property Testing and other areas of Mathematics and Computer Science, such as Number Theory and Discrete Geometry.

The first known removal lemma has been the celebrated (non-induced) \emph{graph removal lemma}, established by Rusza and Szemer\'edi
\cite{RuszaSzemeredi1976} (see also
\cite{AlonDuke1994,AlonFischerKrivelevichSzegedy2000}). 
This fundamental result in Graph Theory states that for any fixed graph $H$
on $h$ vertices and any $\epsilon > 0$ there exists $\delta > 0$, such that for any graph $G$ on $n$ vertices that contains at least $\epsilon n^2$ copies of $H$ that are pairwise edge-disjoint, the total number of $H$-copies in $G$ is at least $\delta n^h$. 
Many extensions and strengthenings of the graph removal lemma have been obtained, as is described in more detail in Section \ref{sec:previous}.

In this work, we consider removal lemmas for two-dimensional \emph{matrices} (with row and column order) over a finite alphabet. For simplicity, the results are generally stated for square matrices, but are easily generalizable to non-square matrices.
Some of the results also hold for matrices in more than two dimensions.
The notation below is given for two-dimensional matrices, but carries over naturally to other combinatorial structures, such as graphs and multi-dimensional matrices.

An $m \times n$ matrix $M$ over the alphabet $\Gamma$ is viewed here as a function $M:[m] \times [n] \to \Gamma$, and the row and column order is dictated by the natural order on their indices. 
Any matrix that can be obtained from a matrix $M$ by deleting some of its rows and columns (while preserving the row and column order) is considered a \emph{submatrix} of $M$. We say that $M$ is \emph{binary} if the alphabet is $\Gamma = \{0,1\}$ and \emph{ternary} if $\Gamma = \{0,1,2\}$.
A \emph{matrix property} $\mathcal{P}$ over $\Gamma$ is simply a collection of matrices $M:[m] \times [n] \to \Gamma$. 
A matrix is \emph{$\epsilon$-far} from $\mathcal{P}$ if one
needs to change at least an $\epsilon$-fraction of its entries to get a matrix
that satisfies $\mathcal{P}$.
A property $\mathcal{P}$ is \emph{hereditary} if it is closed under taking submatrices, that is, if $M \in \mathcal{P}$ then any submatrix $M'$ of $M$ satisfies $M' \in \mathcal{P}$. 
For any family $\mathcal{F}$ of matrices over $\Gamma$, the property of $\mathcal{F}$-freeness, 
denoted by $\mathcal{P}_{\mathcal{F}}$, consists of all matrices over $\Gamma$ that do not contain a submatrix from $\mathcal{F}$.
Observe that $\mathcal{P}$ is hereditary if and only if it is characterized by some family $\mathcal{F}$ of forbidden submatrices, i.e.\@ $\mathcal{P} = \mathcal{P}_{\mathcal{F}}$.

While the investigation of graph removal lemmas has been quite extensive, as described in Section \ref{sec:previous} below, the first known removal lemma for ordered graph-like two-dimensional structures, and specifically for (row and column ordered) matrices, was only obtained very recently by the authors and Fischer \cite{AlonBenEliezerFischer2017}.
\begin{theorem}[\cite{AlonBenEliezerFischer2017}]
	\label{thm:ABF_matrix_removal_lemma}
	Fix a finite alphabet $\Gamma$. For any hereditary property $\mathcal{P}$ of matrices over $\Gamma$ and 
	any $\epsilon > 0$ there exists $f_{\mathcal{P}}(\epsilon)$ satisfying the following.
	If a matrix $M$ is $\epsilon$-far from $\mathcal{P}$ then at least a $2/3$-fraction of the
	$f_{\mathcal{{P}}}(\epsilon) \times f_{\mathcal{{P}}}(\epsilon)$ submatrices of $M$
	do not satisfy $\mathcal{P}$.
\end{theorem}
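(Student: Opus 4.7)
The plan is to argue by the contrapositive: assuming that fewer than a $2/3$ fraction of the $f \times f$ submatrices of $M$ violate $\mathcal{P}$, I construct a matrix $M' \in \mathcal{P}$ at Hamming distance at most $\epsilon \cdot mn$ from $M$. Since $\mathcal{P}$ is hereditary, $\mathcal{P} = \mathcal{P}_{\mathcal{F}}$ for some (possibly infinite) forbidden family $\mathcal{F}$. A preliminary Ramsey/compactness-style reduction should give a finite subfamily $\mathcal{F}_0 \subseteq \mathcal{F}$ of uniformly bounded size $N = N(\epsilon)$ whose avoidance already suffices for the argument: the intuition is that if $M$ is far from $\mathcal{P}$ then it must contain a minimal forbidden submatrix, and minimality caps the dimensions of such a submatrix in terms of $\epsilon$ and $|\Gamma|$.

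The core of the proof is an ordered conditional regularity lemma combined with an order-preserving counting lemma. Applying a matrix regularity lemma (in the spirit of Alon-Fischer-Newman) yields a partition of the rows and columns of $M$ into $O_{\epsilon,|\Gamma|}(1)$ intervals such that most of the resulting blocks are quasirandom with respect to the symbol distribution, conditioned on the types of their row and column intervals. I would then clean $M$ to produce $M'$: entries in irregular or sparse blocks are edited freely, while entries in regular blocks of substantial density are rounded to conform to the dominant pattern of the block. Standard regularity bookkeeping keeps the total number of modifications below $(\epsilon/2) \cdot mn$.

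It remains to rule out the possibility that $M'$ violates $\mathcal{P}$. If it did, $M'$ would contain some $F \in \mathcal{F}_0$, occupying at most $N$ rows and $N$ columns spread across at most $N$ intervals in each direction. The order-preserving counting lemma then yields a density $\delta = \delta(\epsilon) > 0$ of order-preserving copies of $F$ in $M$ itself, obtained by inductively selecting rows and columns from the regular blocks that witnessed $F$ in $M'$ and invoking quasirandomness to maintain a polynomial density of valid partial embeddings at each extension step. Choosing $f$ sufficiently large in $1/\delta$, a Chernoff-type argument shows that all but at most a $1/3$ fraction of the $f \times f$ submatrices of $M$ contain a copy of $F$ and hence violate $\mathcal{P}$, contradicting the assumption and forcing $M' \in \mathcal{P}$. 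The main obstacle I expect is precisely this ordered counting lemma: unlike in unordered graphs where regular pairs immediately yield many copies via essentially independent sampling from each part, in the ordered matrix setting embeddings must respect the row and column orderings across many intervals simultaneously, so the combinatorial argument must carefully control how partial embeddings can lie in the interval partition while maintaining a quasirandom density bound on each extension.
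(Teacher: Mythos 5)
This theorem is not proved in the present paper; it is quoted from \cite{AlonBenEliezerFischer2017}, so I am comparing your sketch against the proof given there. Your high-level template (regularize, clean, and derive a contradiction from a surviving forbidden submatrix via a counting lemma) is the right genre, but three of its load-bearing steps are genuinely gapped. First, the reduction to a finite subfamily $\mathcal{F}_0\subseteq\mathcal{F}$ of size $N(\epsilon)$ is false as stated for infinite hereditary families: minimality of a forbidden witness does not bound its dimensions in terms of $\epsilon$ and $|\Gamma|$ alone, and it is a known subtlety (already in the unordered setting of Alon--Shapira) that being $\epsilon$-far from $\mathcal{F}$-freeness need not imply being $\Omega(\epsilon)$-far from $\mathcal{F}_0$-freeness for any finite $\mathcal{F}_0$. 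In the actual proof the size of the witness one must embed depends on the regularity partition itself, which is precisely what drives $f_{\mathcal{P}}(\epsilon)$ up to wowzer type; the fact that your argument would give vastly better bounds is itself a signal that something essential is missing. Second, a single application of a regularity lemma followed by rounding blocks to their dominant pattern is the template for \emph{non-induced} removal. For a hereditary (induced-type) property the rounding choices in different blocks interact --- one must choose, for every block, a value such that \emph{no} forbidden configuration is created globally --- and this is exactly why the unordered induced removal lemma already requires the strong (iterated, "partition of a partition") regularity scheme of Alon--Fischer--Krivelevich--Szegedy rather than one regularization pass. Your sketch does not contain this iteration.

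Third, the ordered counting lemma that you correctly identify as the main obstacle is not a technicality to be deferred: it is the new content of \cite{AlonBenEliezerFischer2017}. There the order is handled by combining the strong regularity scheme with Ramsey-type arguments so that the cluster structure can be refined into pieces that are simultaneously regular and compatible with the row/column order, allowing an order-preserving embedding of the witness; nothing in your sketch produces interval-structured regular parts. Relatedly, the Alon--Fischer--Newman lemma you invoke is \emph{conditional} (either a clustering exists or the matrix already contains many copies of every small pattern), is stated for binary matrices, and its clusters are not intervals, so it does not directly supply the "partition of rows and columns into intervals with quasirandom blocks" your cleaning step assumes. As written, the proposal is an outline of the standard unordered induced-removal strategy with the two hardest ordered-specific components (the interval-compatible strong regularity and the order-preserving embedding) asserted rather than proved.
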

However, even when $\mathcal{P}$ is characterized by a single forbidden submatrix, the upper bound 
on $f_{\mathcal{P}}(\epsilon)$ guaranteed by the removal lemma in \cite{AlonBenEliezerFischer2017} is very large; in fact, it is at least as large as a wowzer (tower of towers) type function of $\epsilon$. 
On the other hand, a lower bound of Fischer and Rozenberg \cite{FischerRozenberg2007} implies that one cannot hope for a polynomial dependence of $f_{\mathcal{P}}(\epsilon)$ in $\epsilon^{-1}$ in general (for the non-binary case), even when $\mathcal{P}$ is characterized by a single forbidden submatrix.

Thus, it is natural to ask for which hereditary matrix properties $\mathcal{P}$ there exist removal lemmas with more reasonable upper bounds on $f_{\mathcal{P}}(\epsilon)$, and specifically, to identify large families of properties $\mathcal{P}$ for which $f_{\mathcal{P}}(\epsilon)$ is \emph{polynomial} in $\epsilon^{-1}$.
In this work we focus on this question, mainly for matrices over a binary alphabet.

A natural motivation for the investigation of removal lemmas comes from \emph{property testing}. 
This active field of study in computer science, initiated by Rubinfeld and Sudan
\cite{RubinfeldSudan1996} (see \cite{GoldreichGoldwasserRon1998} for the graph
case), is dedicated to finding fast algorithms to distinguish between objects
that satisfy a certain property and objects that are far from satisfying this
property; these algorithms are called testers. 
An \emph{$\epsilon$-tester} for a matrix property $\mathcal{P}$
is a (probabilistic) algorithm that is given query access to the entries of the 
input matrix $M$, and is required to distinguish, with error probability at most $1/3$, 
between the case that $M$ satisfies $\mathcal{P}$ and the case that $M$ is $\epsilon$-far from $\mathcal{P}$.
If the tester always answers correctly when $M$ satisfies $\mathcal{P}$, we say that the tester has
a one-sided error.
We say that $\mathcal{P}$ is testable if there is a one-sided error tester for $\mathcal{P}$ that makes a constant number of queries (that depends only on $\mathcal{P}$ and $\epsilon$ but not on the size of the input).
Furthermore, $\mathcal{P}$ is \emph{easily testable} if the number of queries is polynomial in $\epsilon^{-1}$.
Clearly, any hereditary property of matrices is testable by Theorem \ref{thm:ABF_matrix_removal_lemma}, while any property $\mathcal{P}$ for which $f_{\mathcal{P}}(\epsilon)$ is shown to be polynomial in $\epsilon^{-1}$ is easily testable.

\subsection{Background and main results}
\label{subsec:main_results}
The results here are stated and proved for square $n \times n$ matrices, but can be generalized to non-square matrices in a straightforward manner.
Our first main result is an efficient \emph{weak removal lemma} for binary matrices.

\begin{theorem}
	\label{thm:disjoint_copies}
	If an $n \times n$ binary matrix $M$ contains $\epsilon n^2$ pairwise-disjoint copies of an $s \times t$ binary matrix $A$, then the total number of $A$-copies in $M$ is at least $\delta n^{s+t}$, where $\delta^{-1}$ is polynomial in $\epsilon^{-1}$.
\end{theorem}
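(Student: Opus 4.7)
My approach combines the efficient conditional regularity lemma for matrices of Alon, Fischer and Newman with a pigeonhole argument applied to the disjoint copies, followed by a counting-lemma step.

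First I would apply the AFN conditional regularity lemma to $M$ with regularity parameter $\gamma$ polynomial in $\epsilon$, obtaining partitions $\mathcal{R} = \{R_1,\ldots,R_k\}$ and $\mathcal{C} = \{C_1,\ldots,C_k\}$ of the rows and columns into contiguous intervals, with $k = \mathrm{poly}(\epsilon^{-1})$, such that the vast majority of cells $R_i \times C_j$ are $\gamma$-regular. Discarding the disjoint copies that use any irregular cell costs only a negligible fraction of the total, leaving at least $\epsilon n^2/2$ ``clean'' disjoint copies, each of whose $st$ entries lies in a regular cell.

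Each clean copy determines a pattern $(\mathbf{a},\mathbf{b}) = (a_1 \le \cdots \le a_s;\, b_1 \le \cdots \le b_t)$ recording the row-part and column-part indices of its rows and columns. Since there are at most $k^{s+t}$ such patterns, pigeonhole produces a single pattern hosting $L \ge \mathrm{poly}(\epsilon) \cdot n^2$ clean copies. Because these copies are entry-disjoint, for every position $(i,j)$ of $A$ the cell $R_{a_i} \times C_{b_j}$ contains at least $L$ distinct entries equal to $A[i,j]$. In particular $|R_{a_i}|\cdot|C_{b_j}| \ge L$ for every $(i,j)$, which forces each $|R_{a_i}|$ and each $|C_{b_j}|$ to be at least $\mathrm{poly}(\epsilon)\cdot n$, and the density of $A[i,j]$-entries inside the cell is at least $L/(|R_{a_i}|\cdot|C_{b_j}|) \ge L/n^2 = \mathrm{poly}(\epsilon)$.

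Finally I would apply a counting lemma in the ordered regular setting: given that each of the $st$ prescribed cells is $\gamma$-regular with density of the required value at least $\mathrm{poly}(\epsilon)$, the number of order-respecting choices of rows $r_1 < \cdots < r_s$ with $r_i \in R_{a_i}$ and columns $c_1 < \cdots < c_t$ with $c_j \in C_{b_j}$ for which $M[r_i,c_j] = A[i,j]$ for all $i,j$ is at least $\mathrm{poly}(\epsilon) \cdot \prod_i |R_{a_i}| \prod_j |C_{b_j}| \ge \mathrm{poly}(\epsilon) \cdot n^{s+t}$, which is the desired bound on $\delta$.

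The main obstacle will be the counting step when the pattern contains repeated parts (e.g.\ $a_i = a_{i+1}$): the ordering constraint then forces us to extract an increasing tuple from a single row-part whose induced rows match a sub-pattern of $A$, rather than sampling from independent parts. Handling this cleanly requires either a recursive application of regularity inside each repeated part, or a greedy left-to-right construction of the copy where each new row or column is chosen from a candidate set that the regularity of the remaining cells guarantees to be of polynomial density. Keeping all of the parameters polynomial in $\epsilon^{-1}$ throughout this iteration is the delicate part of the argument.
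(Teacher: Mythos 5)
There is a genuine gap, and it stems from a mischaracterization of the Alon--Fischer--Newman lemma. That lemma does not produce a partition of the rows and columns into \emph{contiguous intervals} whose cells are $\gamma$-regular. What it produces (conditionally: the alternative branch is that $M$ already contains a polynomial fraction of copies of \emph{every} $k\times k$ matrix, which finishes the proof outright) is a \emph{clustering}: a partition of the rows into polynomially many classes such that any two rows in the same class agree on all but $\delta n$ entries, the classes being arbitrary subsets of $[n]$, not intervals. A companion lemma converts such clusterings into a partition whose blocks are nearly \emph{homogeneous} (almost constant), which is what substitutes for a counting lemma here; but the classes still bear no relation to the row order. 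This breaks the final counting step as you set it up: if $R_{a_1}$ consists mostly of rows in the bottom half of $M$ and $R_{a_2}$ mostly of rows in the top half, then essentially no choice of $r_1\in R_{a_1}$, $r_2\in R_{a_2}$ satisfies $r_1<r_2$, so $\prod_i|R_{a_i}|\prod_j|C_{b_j}|$ does not lower-bound the number of order-respecting copies. The difficulty is therefore not confined to repeated parts, and neither of your proposed remedies (recursing regularity inside a part, or a greedy left-to-right construction) addresses it.

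The missing ingredient, which is the heart of the paper's proof, is a separator-finding lemma: either $M$ contains polynomially many copies of every $s\times t$ matrix, or one can find row indices $x_1<\dots<x_{s-1}$ and column indices $y_1<\dots<y_{t-1}$ together with $\mathrm{poly}(\epsilon)\,n^2$ pairwise disjoint $A$-copies whose $i$-th row lies in $(x_{i-1},x_i]$ and whose $j$-th column lies in $(y_{j-1},y_j]$ for all $i,j$. This is proved by induction on $i$: among the surviving disjoint copies one finds a cluster containing many rows that each serve as the $i$-th row of many copies, places the separator $x_i$ in the middle of these rows, and then \emph{shifts} the $i$-th row of each copy lying beyond $x_i$ to a matched similar row before $x_i$; since the two rows differ in at most $\delta_{i-1}n/4$ entries and the copies are disjoint, most of the shifted submatrices remain $A$-copies and now conform to the new separator. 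Once such separators exist, one refines the clustering by them so that every block lies between consecutive separators, applies the homogeneity lemma, and an endgame close to your pigeonhole-plus-counting argument goes through, with the order constraint now automatic. Without some substitute for this separation step, the proposal does not yield a proof.
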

Here a set of pairwise-disjoint $A$-copies in $M$ is a set of $s \times t$ submatrices of $M$, all equal to $A$, such that any entry of $M$ is contained in at most one of the submatrices. 

Theorem \ref{thm:disjoint_copies} is an analogue for binary matrices of the non-induced graph removal lemma. However, in the graph removal lemma, $\delta^{-1}$ is not polynomial in $\epsilon^{-1}$ in general, in contrast to the situation in Theorem \ref{thm:disjoint_copies}. 

Alon, Fischer and Newman \cite{AlonFischerNewman2007} proved an efficient induced
removal lemma for a certain type of finite families $\mathcal{F}$ of binary matrices. A
family $\mathcal{F}$ of matrices, or equivalently, a hereditary matrix property $\mathcal{P}_{\mathcal{F}}$, is \emph{closed under row (column) permutations}
if for any $A \in \mathcal{F}$, any matrix created by permuting the rows (columns
respectively) of $A$ is in $\mathcal{F}$. $\mathcal{F}$ is \emph{closed under permutations} if it is
closed under row permutations and under column permutations.
\begin{theorem}[\cite{AlonFischerNewman2007}]
	\label{thm:AlonFischerNewman1}
	Let $\mathcal{F}$ be a finite family of binary matrices that is closed under permutations.
	For any $\epsilon > 0$ there exists $\delta > 0$, where $\delta^{-1}$ is polynomial in $\epsilon^{-1}$, such that any $n \times n$ binary
	matrix that is $\epsilon$-far from $\mathcal{F}$-freeness contains $\delta n^{s+t}$ 
	copies of some $s \times t$ matrix $A \in \mathcal{F}$. 
\end{theorem}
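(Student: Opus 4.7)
The plan is to combine the efficient conditional regularity lemma for binary matrices of Alon, Fischer, and Newman with a cell-wise cleaning step, a blow-up argument within the regularity partition, and a lifting step exploiting local cell homogeneity. The closure of $\mathcal{F}$ under row and column permutations is essential because the cleaned matrix produced from the regularity partition is inherently invariant under within-block permutations of rows and columns; without permutation-closure, a copy of a matrix in $\mathcal{F}$ extracted from the cleaned matrix and then blown up could correspond to a permuted version that fails to lie in $\mathcal{F}$, which would break both the existence argument and the counting.

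First I would apply the efficient conditional regularity lemma to $M$ with a polynomial parameter $\gamma = \mathrm{poly}(\epsilon)$, obtaining interval partitions $R_1,\ldots,R_k$ of the rows and $C_1,\ldots,C_k$ of the columns of roughly equal size with $k = \mathrm{poly}(\epsilon^{-1})$, such that all but a $\gamma$-fraction of the cells $R_i \times C_j$ are $\gamma$-homogeneous (their majority value agrees with at least a $(1-\gamma)$-fraction of their entries). Define a cleaned matrix $M^*$ by replacing each $\gamma$-homogeneous cell by its majority value and setting the remaining cells to $0$. Choosing $\gamma \le \epsilon/4$ makes the total number of modifications below $\epsilon n^2/2$, so $M^*$ is $(\epsilon/2)$-far from $\mathcal{F}$-freeness and must therefore contain a copy of some $s\times t$ matrix $A \in \mathcal{F}$.

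Next I would perform a blow-up. Fix such a copy of $A$ in $M^*$ on rows $r_1<\cdots<r_s$ with $r_a\in R_{\pi(a)}$ and columns $c_1<\cdots<c_t$ with $c_b\in C_{\tau(b)}$, and let $m_i=|\{a:\pi(a)=i\}|$ and $n_j=|\{b:\tau(b)=j\}|$. Because $M^*$ is constant on every cell and the partitions are into intervals, any alternative choice of $m_i$ rows from $R_i$ (in their natural order) and $n_j$ columns from $C_j$ yields a submatrix of $M^*$ with exactly the same cell-value pattern; any within-block reshuffling is absorbed by the permutation-closure of $\mathcal{F}$. The number of such blown-up copies is at least
\[
\prod_{i} \binom{|R_i|}{m_i} \prod_{j} \binom{|C_j|}{n_j} \;\ge\; c(s,t)\,(n/k)^{s+t} \;=\; \mathrm{poly}(\epsilon)\cdot n^{s+t}.
\]

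Finally I would lift the copies from $M^*$ back to $M$. Each blown-up copy uses $st$ entries drawn from $\gamma$-homogeneous cells, so by linearity the expected number of entries in a random blown-up copy that differ between $M^*$ and $M$ is at most $st\gamma$; a Markov-type double-counting argument, together with $\gamma$ chosen small enough in terms of $s,t$, shows that at least half of the blown-up copies are genuine copies of $A$ in $M$, yielding the desired $\delta n^{s+t}$ copies with $\delta = \mathrm{poly}(\epsilon)$. The main obstacle lies in the cleaning step: one must ensure that $M^*$ remains $\Omega(\epsilon)$-far from $\mathcal{F}$-freeness while simultaneously having constant cells on a polynomial-sized interval partition, which requires the full quantitative strength of the conditional regularity lemma combined with the permutation-invariance of $\mathcal{F}$-freeness that makes the cell abstraction faithful.
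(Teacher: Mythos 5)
The paper does not actually prove this theorem: it is quoted from Alon--Fischer--Newman, and the paper instead establishes the stronger Theorem \ref{thm:removal_row_perm} by a different route (one representative row per large cluster, the auxiliary matrix $Q$, and Observation \ref{obs:removal_strip}), using only the clustering half of the regularity machinery. Your outline follows the classical ``regularity + cleaning + blow-up'' route of the original AFN paper, which is a legitimate strategy, but as written it has a genuine gap at its central step. The copy of $A$ you extract from the cleaned matrix $M^*$ is only guaranteed to \emph{exist}; you have no control over which cells it occupies. In particular it may use entries lying in cells that were \emph{not} $\gamma$-homogeneous (and which you overwrote with the constant $0$), or in clusters that are very small. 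For a non-homogeneous cell the lifting fails: the fraction of entries of $M$ in that cell equal to $A(a,b)$ can be as small as $\gamma$, and those entries can sit in adversarially chosen rows and columns, so neither the union bound (``expected $\le st\gamma$ disagreements'') nor a product-of-probabilities argument recovers $\mathrm{poly}(\epsilon)\,n^{s+t}$ genuine copies in $M$; for a small cluster, $\binom{|R_i|}{m_i}$ is far below $(n/k)^{m_i}$. Your sentence ``each blown-up copy uses $st$ entries drawn from $\gamma$-homogeneous cells'' is precisely the assertion that needs proof, and it is where the real work lies: one must show that if no $A\in\mathcal{F}$ embeds into the cell structure using only \emph{large, homogeneous} cells with matching majority values, then $M$ can be made $\mathcal{F}$-free with fewer than $\epsilon n^2$ changes --- and the delicate part of that cleaning is exactly what to do with the non-homogeneous cells and small clusters without creating new copies. (This is also why the paper's Theorem \ref{thm:disjoint_copies} works from \emph{disjoint} copies: there each bad entry destroys at most one copy, so copies touching bad cells can simply be discarded --- an option unavailable when the hypothesis is only $\epsilon$-farness.)

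Two further, more mechanical inaccuracies. The AFN regularity lemma is \emph{conditional}: it either yields the clusterings/partition or certifies that a $\mathrm{poly}(\epsilon)$ fraction of all $k\times k$ submatrices equal every fixed $k\times k$ matrix; an unconditional partition into $\mathrm{poly}(\epsilon^{-1})$ near-homogeneous blocks does not exist (consider a uniformly random matrix), so the second branch must be handled explicitly --- it is the easy branch, but it cannot be elided into an unconditional statement. Also, the classes produced by the clustering are neither intervals nor of roughly equal size; the interval assumption is harmless here only because $\mathcal{F}$ is closed under permutations (as you note), but the equal-size assumption does real work in your bound $\prod_{i}\binom{|R_i|}{m_i}\prod_{j}\binom{|C_j|}{n_j}\ge c(s,t)(n/k)^{s+t}$ and must be replaced by an explicit step that removes small clusters during the cleaning.
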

The main consequence of Theorem \ref{thm:AlonFischerNewman1} is an efficient induced
removal lemma for bipartite graphs. Indeed, when representing a bipartite graph
by its (bi-)adjacency matrix, a forbidden subgraph $H$ is represented by the
family $\mathcal{F}$ of all matrices that correspond to bipartite graphs isomorphic to
$H$. Note that $\mathcal{F}$ is indeed closed under permutations in this case.
Thus, any hereditary bipartite graph property 
characterized by a finite set of forbidden induced subgraphs is easily testable. 

The problem of understanding whether the statement of Theorem \ref{thm:AlonFischerNewman1}
holds for \emph{any} finite family $\mathcal{F}$ of binary matrices,
was raised in \cite{AlonFischerNewman2007} and is still open. Only recently in \cite{AlonBenEliezerFischer2017} it was shown that the statement holds if
we ignore the polynomial dependence, as stated in Theorem \ref{thm:ABF_matrix_removal_lemma}.
\begin{open}
	\label{open:open1}
	Is it true that for any fixed finite family $\mathcal{F}$ of binary matrices and any $\epsilon > 0$, there exists $\delta > 0$ with $\delta^{-1}$ polynomial in $\epsilon^{-1}$, 
	such that any $n \times n$ binary matrix $M$ that is $\epsilon$-far from $\mathcal{F}$-freeness
	contains $\delta n^{s+t}$ copies of some $s \times t$ matrix $A \in \mathcal{F}$?
\end{open}
Theorem \ref{thm:disjoint_copies} implies that to settle Problem \ref{open:open1} it is
enough to show the following. Fix a finite family $\mathcal{F}$ of binary matrices. Then
for any $\epsilon > 0$ there exists $\tau > 0$, with $\tau^{-1}$ polynomial in $\epsilon^{-1}$, such that any $n \times n$ binary matrix that is $\epsilon$-far from $\mathcal{F}$-freeness contains $\tau n^2$ pairwise disjoint copies of matrices from $\mathcal{F}$. 

Our second main result makes progress towards solving Problem \ref{open:open1} by
generalizing the statement of Theorem \ref{thm:AlonFischerNewman1} to any family $\mathcal{F}$
of binary matrices that is closed under row (or column) permutations.
From now on we only state the results for families that are closed under row permutations, but analogous results hold for families closed under column permutations.
\begin{theorem}
	\label{thm:removal_row_perm}
	Let $\mathcal{F}$ be a finite family of binary matrices that is closed under row permutations.
	For any $\epsilon > 0$ there exists $\delta > 0$, where $\delta^{-1}$ is polynomial in $\epsilon^{-1}$, such that any $n \times n$ binary
	matrix that is $\epsilon$-far from $\mathcal{F}$-freeness contains $\delta n^{s+t}$ 
	copies of some $s \times t$ matrix $A \in \mathcal{F}$.
\end{theorem}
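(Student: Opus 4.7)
The plan is to extend the Alon--Fischer--Newman approach behind Theorem \ref{thm:AlonFischerNewman1} to the weaker symmetry assumption of row-permutation closure. The main engine will be their efficient conditional regular partition lemma for matrices, together with a counting argument that exploits the row-permutation freedom to tolerate the (now unavoidable) column-order constraint.

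First I would apply the efficient conditional regular partition lemma of Alon--Fischer--Newman to $M$ with a regularity parameter $\gamma$ polynomial in $\epsilon$. Since the column order must be preserved, I would use the partition in a mode where the columns are partitioned into a bounded number of intervals $C_1 < C_2 < \ldots < C_q$ in column order, while the rows are grouped into classes $R_1, \ldots, R_p$ whose elements need not form contiguous blocks of row indices. Both $p$ and $q$ are polynomial in $\epsilon^{-1}$, and the regularity property is that for almost every pair $(R_i, C_j)$, the rows of $R_i$ restricted to $C_j$ behave pseudo-randomly around a common density $d_{i,j}$, which in the binary setting can be rounded to a template value $v_{i,j} \in \{0, 1\}$ on all but a small fraction of rows of $R_i$. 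I would then modify fewer than $\epsilon n^2 / 2$ entries to clean up the irregular blocks and the atypical rows inside regular blocks, producing a matrix $M'$ whose blocks are genuinely constant on almost every row.

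Because $M$ is $\epsilon$-far from $\mathcal{F}$-freeness and the cleanup changed fewer than $\epsilon n^2 / 2$ entries, $M'$ still fails to be $\mathcal{F}$-free. By the block-constant structure of $M'$, any copy of an $A \in \mathcal{F}$ in $M'$ corresponds to a \emph{reduced pattern}: a multiset of row classes $R_{i_1}, \ldots, R_{i_s}$ and an ordered tuple of column intervals $C_{j_1} \leq \ldots \leq C_{j_t}$ such that the template $V = (v_{i_a, j_b})_{a,b}$ has the same row-multiset as some matrix in $\mathcal{F}$. Crucially, the row-permutation closure of $\mathcal{F}$ now drives a counting lemma: for uniformly random choices of one row from each class $R_{i_a}$ and one column from each interval $C_{j_b}$, the resulting $s \times t$ submatrix of $M$ equals $V$ up to low-probability noise by regularity; the submatrix is then some row-permutation of $V$ (the order within it being dictated by the row indices, not by the listed order $i_1, \ldots, i_s$), and row-permutation closure guarantees it belongs to $\mathcal{F}$. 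A direct count yields $\delta n^{s+t}$ copies of some single $A \in \mathcal{F}$ (pigeonholing over the finite $\mathcal{F}$), with $\delta^{-1}$ polynomial in $\epsilon^{-1}$.

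The main obstacle I expect lies in the pattern-extraction step: guaranteeing that the non-$\mathcal{F}$-freeness of $M'$ lifts cleanly to a reduced pattern at the level of the block template. Unlike in the doubly permutation-closed setting of Theorem \ref{thm:AlonFischerNewman1}, the column order forces $(C_{j_1}, \ldots, C_{j_t})$ to respect the interval order and allows several columns of a single $\mathcal{F}$-copy to fall in the same interval, so the reduction must handle repetitions and order simultaneously. The row-permutation closure partially compensates by allowing copies in $M'$ to be aligned to row classes in any order, but controlling the cleanup budget as a polynomial in $\epsilon$ — rather than letting it blow up through the regularity parameters — will be the technical heart of the argument.
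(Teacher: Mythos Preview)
Your plan has a real gap at the very first structural step: the Alon--Fischer--Newman conditional regularity lemma does not produce a partition of the columns into \emph{intervals}. What it gives (Lemma~\ref{conditional_clustering}) is a clustering of the columns into a bounded number of sets of pairwise-similar columns, and these sets can be arbitrarily scattered along $[n]$; refining them into intervals may create unboundedly many parts. Your claim that block densities $d_{i,j}$ ``can be rounded to a template value $v_{i,j}\in\{0,1\}$'' depends on block homogeneity, which (via Lemma~\ref{conditional_partition}) requires \emph{both} a row and a column clustering --- and then the column parts are clusters, not intervals. With only a row clustering and a genuine column interval $C_j$, the block $R_i\times C_j$ can perfectly well have density $1/2$ and admit no template value at all, so the pattern-extraction and counting steps never get started. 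The obstacle you flag at the end (repetitions and order among the $C_{j_b}$) is downstream of this more basic one.

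The paper's proof avoids column partitions entirely and uses only the row-clustering half of the conditional lemma. It first reduces to \emph{unfoldable} $\mathcal{F}$ (no two adjacent rows or columns of any $A\in\mathcal{F}$ are equal) via Lemma~\ref{lem:foldable_to_unfoldable_lem}. It then picks one representative row from each large row cluster, forming a bounded-height, width-$n$ matrix $Q$, and argues: if $Q$ had at most $\epsilon n/3k$ pairwise-disjoint $\mathcal{F}$-copies, one could make $M$ $\mathcal{F}$-free with fewer than $\epsilon n^2$ changes by (i) replacing every row by its cluster representative and (ii) replacing each of the few bad columns by a neighbouring column --- unfoldability guarantees step (ii) creates no new $\mathcal{F}$-copies. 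Hence $Q$ contains many disjoint $\mathcal{F}$-copies; an elementary one-dimensional strip lemma (Observation~\ref{obs:removal_strip}) upgrades this to $\Omega(n^t)$ copies of some fixed $A\in\mathcal{F}$ inside a fixed $s\times n$ substrip of $Q$; and randomising the representative within each large row cluster lifts this to $\delta n^{s+t}$ copies of $A$ in $M$, using row-permutation closure. The column order is thus handled not by an interval partition but by reducing to a constant-height strip, where it is trivial.
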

\begin{cor}
	\label{cor:reasonably_row_ordered}
	Any hereditary property of binary matrices that is characterized by a finite forbidden 
	family closed under row permutations is easily testable.
\end{cor}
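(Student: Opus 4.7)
As the authors note just before the theorem statement, Theorem~\ref{thm:disjoint_copies} combined with the pigeonhole principle over the finite family $\mathcal{F}$ reduces the task to proving the following intermediate statement: if $M$ is an $n \times n$ binary matrix $\epsilon$-far from $\mathcal{F}$-freeness, then $M$ contains at least $\tau n^2$ pairwise disjoint submatrices, each equal to some matrix in $\mathcal{F}$, where $\tau^{-1}$ is polynomial in $\epsilon^{-1}$. Indeed, among any such disjoint family, pigeonhole gives at least $\tau n^2 / |\mathcal{F}|$ copies of some fixed $A \in \mathcal{F}$, to which Theorem~\ref{thm:disjoint_copies} applies to produce $\delta n^{s+t}$ total $A$-copies with $\delta^{-1}$ polynomial in $\epsilon^{-1}$. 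All subsequent work is directed at producing many pairwise disjoint $\mathcal{F}$-copies.

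\textbf{Exploiting row-permutation closure.} Because $\mathcal{F}$ is closed under row permutations, I would identify each $A \in \mathcal{F}$ with the multiset of its $s$ row vectors in $\{0,1\}^t$. An $\mathcal{F}$-copy in $M$ then amounts to choosing an ordered sequence of columns $c_1 < \cdots < c_t$ and an unordered set of $s$ rows whose induced $s \times t$ submatrix has row-multiset matching some $A \in \mathcal{F}$. Next I would apply the efficient conditional regularity lemma of \cite{AlonFischerNewman2007}, obtaining a column partition into polynomially many intervals $C_1 < \cdots < C_L$ (respecting the column order, as required by the lack of column-permutation closure) and an arbitrary row partition $R_1, \ldots, R_K$ (permitted by row-closure), such that most blocks $R_i \times C_j$ are approximately homogeneous in the appropriate conditional sense. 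The associated model records, for each pair $(R_i, C_j)$, a density parameter controlling the distribution of row-patterns of $R_i$ restricted to $C_j$.

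\textbf{Forbidden model configuration and lifting to disjoint copies.} The key intermediate claim is that $\epsilon$-farness forces the model to contain a robust forbidden configuration: an ordered $t$-tuple of column intervals $C_{j_1} < \cdots < C_{j_t}$ together with a multiset of $s$ row classes whose expected joint row-multiset matches some $A \in \mathcal{F}$, and with enough ``room'' in each block to realize the configuration many times. Otherwise, one could zero out a small (polynomial in $\epsilon$) fraction of entries to destroy all $\mathcal{F}$-copies, contradicting $\epsilon$-farness. Sampling actual columns from each $C_{j_k}$ and actual rows from each selected $R_i$, the regularity estimates guarantee that a constant (polynomial in $\epsilon$) fraction of samples yield genuine $\mathcal{F}$-copies; a standard greedy / random packing argument, using that each copy occupies only $st$ cells while many candidate rows and columns remain available in every selected block, then extracts $\Omega(\tau n^2)$ pairwise disjoint copies.

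\textbf{Main obstacle.} The crux of the difficulty is executing the conditional regularity step with a polynomial-in-$\epsilon^{-1}$ number of parts while the column partition is constrained to be an interval partition. The lower bound of \cite{FischerRozenberg2007} shows that without some symmetry polynomial dependence is impossible, so the entire argument must leverage row-permutation closure — which both allows arbitrary row partitions and replaces ordered row matching by row-multiset matching — as the sole source of symmetry. Showing that this one-sided symmetry suffices to maintain polynomial bounds through both the regularity step and the lifting-to-disjoint-copies step is, I expect, the most delicate part of the argument.
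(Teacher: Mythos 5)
There is a genuine gap. The content of this corollary is Theorem~\ref{thm:removal_row_perm} (given that theorem, the tester that samples a random $\mathrm{poly}(\epsilon^{-1})\times \mathrm{poly}(\epsilon^{-1})$ submatrix and rejects iff it contains a member of $\mathcal{F}$ is standard), and your plan reduces that theorem, via Theorem~\ref{thm:disjoint_copies}, to the claim that an $\epsilon$-far matrix contains $\tau n^2$ pairwise disjoint $\mathcal{F}$-copies with $\tau^{-1}$ polynomial in $\epsilon^{-1}$. The reduction itself is sound (the paper states it right after Problem~\ref{open:open1}), but the reduced claim is exactly the unresolved core of Problem~\ref{open:open1}, here specialized to row-permutation-closed families, and your third paragraph does not prove it: the ``robust forbidden configuration in the model'' is asserted rather than derived, and your closing paragraph concedes that the decisive step remains open. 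Two further inaccuracies compound this. First, Lemma~\ref{conditional_clustering} produces clusterings --- arbitrary sets of pairwise near-identical rows or columns --- not an interval partition of the columns respecting their order. Second, any repair step that overwrites entries to destroy copies can create new copies of matrices from $\mathcal{F}$ unless one first passes to an unfoldable family via Lemma~\ref{lem:foldable_to_unfoldable_lem}, which you never invoke.

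The paper's proof takes a different route that sidesteps the disjoint-copies bottleneck entirely: it never produces $\Omega(n^2)$ disjoint copies in $M$ and never uses Theorem~\ref{thm:disjoint_copies}. After reducing to unfoldable $\mathcal{F}$, it applies only the row half of Lemma~\ref{conditional_clustering}, forms a representative submatrix $Q$ with one row per large cluster, and argues: if some choice of $Q$ had at most $\epsilon n/3k$ pairwise disjoint $\mathcal{F}$-copies, then $M$ could be made $\mathcal{F}$-free cheaply by rewriting every row as its cluster representative (harmless because $\mathcal{F}$ is closed under row permutations) and then overwriting the few columns meeting those copies by their neighbors (harmless because $\mathcal{F}$ is unfoldable), contradicting $\epsilon$-farness. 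Hence every $Q$ carries $\Omega(\epsilon n)$ disjoint copies, which by pigeonhole concentrate in a single $s\times n$ strip; the elementary Observation~\ref{obs:removal_strip} yields $\gamma n^{t}$ copies there, and a two-stage sampling argument (random $Q$, then a random submatrix of $Q$) transfers this to $\delta n^{s+t}$ copies of some $A\in\mathcal{F}$ in $M$. To salvage your outline you would have to actually prove your intermediate claim about the model; as written it restates the difficulty rather than resolving it.
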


Our proof of Theorem \ref{thm:removal_row_perm} is somewhat simpler than the
original proof of Theorem \ref{thm:AlonFischerNewman1}. One of the main tools in the
proofs of Theorems \ref{thm:disjoint_copies} and \ref{thm:removal_row_perm} is an
efficient conditional regularity lemma for matrices developed in
\cite{AlonFischerNewman2007} (see also \cite{LovaszSzegedy2010}). In the proof
of Theorem \ref{thm:removal_row_perm} we only use a simpler form of the lemma, which
is also easier to prove. The statement of the lemma and the proofs of Theorems
\ref{thm:disjoint_copies}, \ref{thm:removal_row_perm} appear in Section
\ref{sec:binary}.


Besides the above two main results,  
we also describe a simpler variant of the construction of Fischer and Rozenberg \cite{FischerRozenberg2007}, showing that for ternary matrices, the dependence between the parameters is not polynomial in general. 
We further suggest a way to tackle the weak removal lemma (i.e.\@ the analogue of Theorem \ref{thm:disjoint_copies}, without the polynomial dependence)
in high dimensional matrices over arbitrary alphabets, by reducing it to an equivalent problem 
that looks more accessible. For more details, see Section \ref{sec:non_binary}.

\section{Related work}
\label{sec:previous}
Removal lemmas have been studied extensively in the context of graphs.
The non-induced graph removal lemma (which was stated in the beginning of Section \ref{sec:intro}) has been one of the first applications of the celebrated
Szemer\'edi graph regularity lemma \cite{Szemeredi1976}.
The \emph{induced graph removal lemma}, established in
\cite{AlonFischerKrivelevichSzegedy2000} by proving a stronger version of the
graph regularity lemma, is a similar result considering induced subgraphs. It states that for any finite family $\mathcal{F}$ of graphs and any $\epsilon > 0$ there exists $\delta = \delta(\mathcal{F}, \epsilon) > 0$ with the following property.
If an $n$-vertex graph $G$ is $\epsilon$-far from $\mathcal{F}$-freeness, then it contains at least
$\delta n^{v(F)}$ induced copies of some $F \in \mathcal{F}$.
Here $v(F)$ denotes the number of vertices in $F$, and 
$G$ is said to be (induced) $\mathcal{F}$-free if no induced subgraph of $G$ is isomorphic to a graph from 
$\mathcal{F}$.

The induced graph removal lemma was later extended to infinite families \cite{AlonShapira2008}, stating the following.
For any finite or infinite family $\mathcal{F}$ of graphs and any $\epsilon > 0$ there exists $f_{\mathcal{F}}(\epsilon)$ with the following property. If an $n$-vertex graph $G$ is
$\epsilon$-far from $\mathcal{F}$-freeness, then with probability at least $2/3$, a random induced subgraph of $G$ on $f_{\mathcal{F}}(\epsilon)$ vertices contains a graph from $\mathcal{F}$. 
Note that when $\mathcal{F}$ is finite, the statement of the infinite induced removal lemma is indeed equivalent to that of the finite version of the induced removal lemma.

The graph removal lemma was also extended to hypergraphs \cite{Gowers2001, RodlSkokan2004, NagleRodlSchacht2006, Tao2006}. 
See \cite{ConlonFox2013} for many more useful variants, quantitative strengthenings and extensions of the graph removal lemma.

Very recently, the authors and Fischer \cite{AlonBenEliezerFischer2017} generalized the
(finite and infinite) induced graph removal lemma by obtaining an \emph{order-preserving} version of it, and also 
showed that the same type of proof can be used to obtain a removal lemma for two-dimensional \emph{matrices} (with row and column order) over a finite alphabet; this it Theorem \ref{thm:ABF_matrix_removal_lemma} above. 

However, even for the non-induced graph removal lemma where the forbidden subgraph is a triangle, the best known general upper bound for $\delta^{-1}$ in terms of $\epsilon^{-1}$ is of tower-type \cite{Fox2011, ConlonFox2012}. On the other hand, the best known lower bound for the dependence is super-polynomial but sub-exponential, and builds on a construction of Behrend \cite{Behrend1946}. See \cite{Alon2001} for more details.
Understanding the ``right'' dependence of $\delta^{-1}$ in $\epsilon^{-1}$, even for the simple case where the forbidden graph $H$ is a triangle, is considered an important and difficult open problem.

In view of the above discussion, a lot of effort has been dedicated to the problem of characterizing the hereditary graph properties $\mathcal{P}$
for which $f_{\mathcal{P}}(\epsilon)$ is polynomial in $\epsilon^{-1}$, i.e.\@, the easily testable graph properties. See the recent work of Gishboliner and Shapira \cite{GishbolinerShapira2016}; for other previous works on this subject, see, e.g., \cite{Alon2001, AlonShapira2006, AlonFox2015}.
Our work also falls under this category, but for (ordered) matrices instead of graphs; it is the first work of this type for \emph{ordered} two-dimensional graph-like structures.

We finish by mentioning several other relevant removal lemma type results.
Removal lemmas for vectors (i.e.\@ one dimensional matrices where the order is
important) are generally easier to obtain; in particular, a removal lemma for
vectors over a fixed finite alphabet can be derived from a removal lemma for
regular languages proved in \cite{AlonKrivelevichNewmanSzegedy99}. A removal
lemma for partially ordered sets with a grid-like structure, which can be seen
as a generalization of the removal lemma for vectors, can be deduced from a
result of Fischer and Newman in \cite{FischerNewman2007}, where they mention
that this problem for submatrices is more complicated and not understood. 
Recently, Ben-Eliezer, Korman and Reichman \cite{BenEliezerKormanReichman2017} obtained 
a removal lemma for patterns in multi-dimensional matrices. A pattern must be
taken from \emph{consecutive} locations, whereas in our case the
rows and columns of a submatrix need not be consecutive.
The case of patterns behaves very differently than that of submatrices,
and in particular, in the removal lemma for patterns the parameters are linearly
related (for any alphabet size) unlike the case of submatrices (in which,
for alphabets of $3$ letters or more, the relation 
cannot be polynomial).

\section{Notation}
\label{sec:more_notation}
Here we give some more notation that will be useful throughout the rest of the
paper. We give the notation for rows but the notation for columns is equivalent.
Let $M:[m] \times [n] \to \Gamma$ be an $m \times n$ matrix.
For two rows in $M$ whose indices in $I$ are $r < r'$, we say that row $r$ is
\emph{smaller} than row $r'$ and row $r'$ is \emph{larger} than row $r$.
The \emph{predecessor} of row $r$ in $M$ is the largest row $\bar{r}$ in $M$
smaller than $r$. In this case we say that $r$ is the \emph{successor} of
$\bar{r}$.

Let $S$ be the submatrix of $M$ on $\{r_1, \ldots, r_s\} \times \{c_1, \ldots,
c_t\}$ where $r_1 < \ldots < r_s$ and $c_1 < \ldots < c_t$. For $i=1,\ldots, s$,
the \emph{$i$-row-index} of $S$ in $M$ is $r_i$; For two submatrices $S,S'$ of
the same dimensions and with $i$-row-indices $r_i, r'_i$ respectively we say
that $S$ is \emph{$i$-row-smaller} than $S'$ if $r_i < r'_i$ and
\emph{$i$-row-bigger} if $r_i > r'_i$.

Let $X = \{x_1, \ldots, x_{s-1}\} \subseteq [m]$ with $0 < x_1 < \ldots <
x_{s-1} < m$ and $Y = \{y_1, \ldots, y_{t-1}\} \subseteq [n]$ with $0 < y_1 <
\ldots < y_{t-1} < n$ be subsets of indices.
The submatrix $S$ is \emph{row-separated} by $X$ if $r_i \leq x_i < r_{i+1}$ for
any $i=1,\ldots, s-1$, \emph{column-separated} by $Y$ if $c_j \leq y_j <
c_{j+1}$ for any $j=1,\ldots, t-1$ and \emph{separated} by $X \times Y$ if it is
row separated by $X$ and column separated by $Y$.
The elements of $X,Y$ are called \emph{row separators}, \emph{column separators}
respectively.

\subsection{Folding and unfoldable matrices}
A matrix is \emph{unfoldable} if no two
neighboring rows in it are equal and no two neighboring columns in it are
equal. The \emph{folding} of a matrix $A$ is the unique matrix $\tilde{A}$
generated from $A$ by deleting any row of $A$ that is equal to its predecessor,
and then deleting any column of the resulting matrix that is equal to its
predecessor. Note that $\tilde{A}$ is unfoldable.
\begin{lemma}
	\label{lem:foldable_to_unfoldable_lem}
	Fix an $s \times t$ matrix $A$ and let $\tilde{A}$ be its $s' \times t'$ folding. For any $\epsilon > 0$ there exist $n_0, \delta > 0$, where $n_0$ and $\delta^{-1}$ are polynomial in $\epsilon^{-1}$, such that
	for any $n \geq n_0$, any $n \times n$ matrix $M$   that contains $\epsilon n^{s'+t'}$ copies of $\tilde{A}$ also contains $\delta n^{s+t}$ copies of $A$. 
\end{lemma}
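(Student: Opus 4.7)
The plan is to unfold $\tilde A$ into $A$ by adding one duplicated row or column at a time. Specifically, I construct an auxiliary chain of matrices $\tilde A = A^{(0)}, A^{(1)}, \ldots, A^{(L)} = A$, where each $A^{(j+1)}$ arises from $A^{(j)}$ by duplicating a single row or column (inserted adjacent to the original), and $L = (s-s') + (t-t')$ is a constant depending only on $A$. Note that each $A^{(j)}$ still has folding $\tilde A$, so the chain is well-defined. It then suffices to prove the following one-step polynomial bound: if $M$ contains $\eta n^{a+b}$ copies of some $a \times b$ matrix $B$ (with $n$ polynomially large in $\eta^{-1}$), then $M$ contains $\eta' n^{a+b+1}$ copies of $B^{(+i)}$, the matrix obtained from $B$ by duplicating its $i$-th row, with $\eta' = \mathrm{poly}(\eta)$. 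The analogous column case is symmetric, and composing the bound $L$ times (with $L$ constant) yields the lemma.

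For the one-step claim, for each column-choice $c = (c_1 < \cdots < c_b)$ let $R_i(c) := \{r \in [n] : M[r, c_j] = B_{ij}\text{ for all }j\}$. Each copy of $B^{(+i)}$ in $M$ decomposes uniquely as a $B$-copy $C$ together with an extra row $r \in R_i(c) \cap (r_i^C, r_{i+1}^C)$, giving
\[
\#\{B^{(+i)}\text{-copies in }M\} \;=\; \sum_{c}\sum_{C\text{ a }B\text{-copy on }c} g_i(c,C), \qquad g_i(c,C) := |R_i(c) \cap (r_i^C, r_{i+1}^C)|.
\]
I bound this sum from below in two sub-steps. First, by a pigeonhole argument, at least $\eta n^{a+b}/2$ of the $B$-copies are \emph{well-spread}, meaning every row-gap $r_{j+1}^C - r_j^C$ is at least $\eta n/(2a)$: any $B$-copy with some $j$-th gap of length at most $\Delta$ belongs to a family of size at most $\Delta \cdot n^{a+b-1}$ (one gap endpoint, the remaining $a-2$ rows, and the $b$ columns), so summing over $j$ and setting $\Delta = \eta n/(2a)$ bounds the short-gap contribution by $\eta n^{a+b}/2$. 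Second, from the trivial estimate $N_B(c) \leq |R_i(c)|\cdot n^{a-1}$ applied termwise to $\sum_c N_B(c) = \eta n^{a+b}$, I obtain $\sum_c |R_i(c)| \geq \eta n^{b+1}$, so pattern-$i$ rows are abundant on average over $c$. Combined with the polynomial-size gaps of well-spread copies, this should translate into the desired polynomial lower bound on $\sum g_i(c,C)$.

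The main technical obstacle will be ruling out pathological correlations: \emph{a priori}, the pattern-$i$ rows in $R_i(c)$ could be concentrated outside the $i$-th gaps of well-spread $B$-copies, making $g_i(c,C) = 0$ for most $(c,C)$. I plan to handle this via a second-moment / double-counting argument, relating $\sum g_i(c,C)$ to $\sum g_i(c,C)^2$; the latter counts ordered pairs $(r,r')$ of pattern-$i$ rows both lying in the $i$-th gap of a $B$-copy, and can be bounded by a direct combinatorial estimate on \emph{twice-enriched} configurations (essentially, $B$-copies with two marked extra rows in the $i$-th gap). Combining this estimate with the first-moment bounds from the two sub-steps above — via Cauchy--Schwarz together with the abundance estimate $\sum_c |R_i(c)| \geq \eta n^{b+1}$ — yields $\eta' = \mathrm{poly}(\eta)$. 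Iterating the resulting one-step bound along the $L$-step unfolding chain from $\tilde A$ to $A$ then completes the proof of the lemma.
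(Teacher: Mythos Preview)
Your overall strategy---building a chain $\tilde A = A^{(0)}, \ldots, A^{(L)} = A$ of one-step row/column duplications and iterating a polynomial one-step bound---is sound and would work. The one-step claim itself is true. However, the argument you sketch for it has a genuine gap at the decisive point.

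You set up the count $\#\{B^{(+i)}\} = \sum_{c,C} g_i(c,C)$ correctly, and your two preliminary observations (most $B$-copies are well-spread; $\sum_c |R_i(c)| \ge \eta n^{b+1}$) are valid. But the proposed ``second-moment / Cauchy--Schwarz'' step does not yield a \emph{lower} bound on $\sum g_i(c,C)$. Cauchy--Schwarz applied to the family $\{g_i(c,C)\}$ gives $\bigl(\sum g_i\bigr)^2 \le (\#\text{pairs }(c,C))\cdot \sum g_i^2$, which upper-bounds $\sum g_i$; an upper bound on $\sum g_i^2$ (from counting ``twice-enriched'' configurations) only makes this worse. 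Nor do your two first-moment facts combine with any such estimate to force $g_i$ to be positive often: the obstacle you yourself identify---that $R_i(c)$ might sit entirely outside the $i$-th gaps of the well-spread copies on $c$---is not ruled out by abundance of $R_i(c)$ together with largeness of the gaps, and a Paley--Zygmund style inequality would need a lower bound on the first moment, which is exactly what you are trying to prove.

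The fix is to change the object you apply convexity to. Instead of summing over pairs $(c,C)$ with $C$ a full $B$-copy, sum over \emph{frames} $F = (c; r_1 < \cdots < r_{i-1}, r_{i+1} < \cdots < r_a)$ consisting of the columns and all rows except the $i$-th. Writing $m(F) := |R_i(c) \cap (r_{i-1}, r_{i+1})|$, one has
\[
\#\{B\text{-copies}\} = \sum_F m(F) = \eta n^{a+b}, \qquad \#\{B^{(+i)}\text{-copies}\} = \sum_F \binom{m(F)}{2}.
\]
Since there are at most $n^{a+b-1}$ frames, Cauchy--Schwarz gives $\sum_F m(F)^2 \ge \eta^2 n^{a+b+1}$, and hence $\sum_F \binom{m(F)}{2} \ge \tfrac14 \eta^2 n^{a+b+1}$ once $n \ge 2/\eta$. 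This one-line convexity argument replaces your entire second-moment plan; neither the well-spread restriction nor the abundance estimate is needed.

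For comparison, the paper does not iterate one duplication at a time. It passes in a single step from $A$ to the matrix $A'$ obtained by deleting \emph{all} rows equal to their predecessors (and then symmetrically for columns). The argument is: by averaging, many $n \times t$ column-strips $S$ carry $\ge \tfrac{\epsilon}{2} n^{s'}$ copies of $A'$; a greedy deletion in each such $S$ yields $\Omega(\epsilon n)$ pairwise-disjoint $A'$-copies; one then partitions these disjoint copies into $s$ ordered blocks and assembles $A$-copies by picking the appropriate row from a copy in each block. This gives all $s-s'$ extra rows at once and yields somewhat better exponents than iterating $L$ times, but your chain approach (with the corrected one-step bound) is also perfectly valid and arguably cleaner.
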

Lemma \ref{lem:foldable_to_unfoldable_lem} implies that generally, to prove removal
lemma type results for finite families, it is enough to only consider families of unfoldable
matrices. The proof follows immediately from the following lemma.
\begin{lemma}
	\label{folding_one_step}
	Let $A$ be an $s \times t$ fixed matrix and
	let $A'$ be an $s' \times t$ matrix created from $A$ by deleting rows that 
	are equal to their predecessors in $A$.
	Then for any $\epsilon > 0$ there exist $n_1 = n_1(A, \epsilon) > 0$ and $\tau =
	\tau(A, \epsilon) > 0$, where $n_1$ and $\tau^{-1}$ are polynomial in $\epsilon^{-1}$, 
	such that for any $n \geq n_1$, any $n \times n$ matrix
	$M$ that contains $\epsilon n^{s'+t}$ copies of $A'$ also contains $\tau n^{s+t}$
	copies of $A$.
\end{lemma}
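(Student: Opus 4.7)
The plan is to reduce to the case where $A$ has exactly one more row than $A'$ and then iterate. Concretely, $A$ can be obtained from $A'$ by inserting $s-s'$ duplicate rows one at a time (at each step, a single new row equal to its predecessor is inserted). Since $s-s'$ is a constant depending only on $A$, iterating a bound of the form ``density $\epsilon$ of $A'$-copies yields density $\mathrm{poly}(\epsilon)$ of $A$-copies'' a constant number of times keeps $\tau^{-1}$ polynomial in $\epsilon^{-1}$ (and keeps the required $n_1$ polynomial in $\epsilon^{-1}$ as well). It therefore suffices to handle the single-duplication case $s = s'+1$, in which exactly one row of $A'$ is duplicated.

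For this case, say $A[j] = A[j+1] = A'[j]$, with the remaining rows of $A$ matching those of $A'$. I would organize both $A'$- and $A$-copies around a shared template. Fix columns $c_1 < \cdots < c_t$, set $r_0 := 0$ and $r_{s'+1} := n+1$, and call a tuple $T = (r_1,\ldots,r_{j-1},r_{j+1},\ldots,r_{s'},c_1,\ldots,c_t)$ a \emph{template} if $M[r_l, c_k] = A'[l,k]$ for all $l \neq j$ and all $k$. Writing $\mathcal{T}$ for the set of templates and
\[
S(T) := \{r \in (r_{j-1}, r_{j+1}) : M[r, c_k] = A'[j, k] \text{ for all } k\}, \qquad P(T) := |S(T)|,
\]
each $A'$-copy corresponds uniquely to a pair $(T, r)$ with $r \in S(T)$, while each $A$-copy corresponds uniquely to a pair $(T, \{u,u'\})$ with $u<u'$ both in $S(T)$ (these $u,u'$ play the role of the two duplicated rows). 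This identifies
\[
N_{A'} = \sum_{T \in \mathcal{T}} P(T), \qquad N_A = \sum_{T \in \mathcal{T}} \binom{P(T)}{2}.
\]

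The remaining step extracts a polynomial lower bound on $N_A$ from the one on $N_{A'}$. Jensen's inequality applied to the convex function $x \mapsto \binom{x}{2}$ gives $N_A \geq |\mathcal{T}| \binom{N_{A'}/|\mathcal{T}|}{2}$, and combining the trivial bound $|\mathcal{T}| \leq n^{s'+t-1}$ with $N_{A'} \geq \epsilon n^{s'+t}$ yields $N_{A'}/|\mathcal{T}| \geq \epsilon n$. Hence
\[
N_A \geq \frac{N_{A'}}{2}\left(\frac{N_{A'}}{|\mathcal{T}|} - 1\right) \geq \frac{\epsilon n^{s'+t}}{2}(\epsilon n - 1) \geq \frac{\epsilon^2}{4}\, n^{s+t}
\]
once $n \geq 2/\epsilon$, completing the single-duplication step. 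The only conceptually nontrivial point --- and hence where I expect to spend the most care --- is setting up the template bookkeeping so that $A'$- and $A$-copies are parametrized by the same $\mathcal{T}$; once that correspondence is in place, the rest is a routine convexity calculation together with crude volumetric bounds, and I do not anticipate any substantive obstacle.
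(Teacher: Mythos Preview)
Your proof is correct but follows a genuinely different route from the paper. The paper does not reduce to the single-duplication case; it handles all $s-s'$ missing rows in one shot. It first averages over the $\binom{n}{t}$ column choices to locate $\Omega(\epsilon n^t)$ ``rich'' $n\times t$ strips $S$, then inside each such strip greedily extracts $\Omega(\epsilon n/s')$ pairwise row-disjoint $A'$-copies, partitions these copies into $s$ ordered groups $\mathcal{A}_1,\ldots,\mathcal{A}_s$, and forms an $A$-copy by selecting, for each $i\in[s]$, the $\pi(i)$-th row of some copy in $\mathcal{A}_i$ (where $\pi:[s]\to[s']$ records which row of $A'$ each row of $A$ equals). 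Your template-plus-Jensen argument is more elementary: it needs no disjoint-copies extraction and no ordering of groups, only the bijection you set up and a convexity inequality. The trade-off is purely quantitative: the paper's one-shot argument gives $\tau^{-1}$ of degree roughly $s$ in $\epsilon^{-1}$, whereas your iteration squares the loss at each of the $s-s'$ steps, yielding degree $2^{s-s'}$. Since $s-s'$ depends only on $A$, both bounds are polynomial in $\epsilon^{-1}$ and both establish the lemma as stated.
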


\begin{proof}[Proof of Lemma \ref{folding_one_step}]
	Let $T$ be the family of all $n \times t$ submatrices $S$ of $M$ containing at
	least $\epsilon {n^{s'}} / 2$ copies of $A'$.
	Any $S \in T$ has $\binom{n}{s'} \leq n^{s'}$ $s' \times t$ submatrices, so the number of
	$A'$ copies in submatrices from $T$ is at most $|T| n^{s'}$. On the other
	hand, there are $\binom{n}{t} \leq n^t$ $n \times t$ submatrices of $M$ so the number of
	$A'$ copies in $n \times t$ submatrices not in $T$ is less than $\epsilon n^{s'+t} / 2$. Hence the total number of $A'$ copies in
	submatrices from $T$ is at least $\epsilon n^{s'+t} / 2$,
	implying that $|T| \geq \epsilon n^t / 2$.
	
	Observe that any $S \in T$ contains a collection $\mathcal{A}(S)$ of $\epsilon n / 2s'$
	pairwise disjoint copies of $A'$. To show this, we follow a greedy approach,
	starting with a collection $\mathcal{B}$ of all $A'$-copies in $S$ and with
	empty $\mathcal{A}$. As long as $\mathcal{B}$ is not empty, we arbitrarily
	choose a copy $C \in \mathcal{B}$ of $A'$, add $C$ to $\mathcal{A}$ and delete
	all $A'$-copies intersecting $C$ (including itself) from $\mathcal{B}$. In each
	step, the number of deleted copies is at most $s' n^{s'-1}$, so the number of steps is at least $\epsilon n^{s'} /
	2 s' {n^{s'-1}} = \epsilon n / 2s'$.
	
	Let $\delta = \epsilon / 5ss'$ and take $S \in T$. Assuming that $n$ is large enough,
	pick disjoint collections $\mathcal{A}_1, \ldots, \mathcal{A}_s \subseteq \mathcal{A}(S)$, each of size at least $\delta n$,
	so that all $A'$-copies in $\mathcal{A}_i$ are $i$-row-smaller than all $A'$-copies in $\mathcal{A}_{i+1}$ for any $1 \leq i \leq s-1$.
	Then there are $\delta^s n^s$ copies of $A$ in $S$: Each $s \times t$ submatrix of $S$ whose $i$-th
	row is taken as the $i$-th row of a matrix from $\mathcal{A}_i$ is equal to $A$.
	Therefore, the total number of $A$-copies in $M$ is at least $|T| \delta^s n^s \geq \epsilon \delta^s n^{s+t} / 2$, as desired.
\end{proof}



\section{Proofs for the binary case}
\label{sec:binary}
This section is dedicated to the proof of our main results in the binary domain:
Theorem \ref{thm:disjoint_copies} and Theorem \ref{thm:removal_row_perm}.
As a general remark for the proofs in this section, 
We may and will assume that a square matrix $M$ is sufficiently large (given
$\epsilon > 0$), by which we mean that $M$ is an $n \times n$ matrix with $n \geq
n_0$ for a suitable $n_0 > 0$ that is polynomial in $\epsilon^{-1}$.

One of the main tools in the proofs of this section is a conditional regularity
lemma for matrices due to Alon, Fischer and Newman \cite{AlonFischerNewman2007}.
We describe a simpler version of the lemma (this is Lemma
\ref{conditional_clustering} below) along with another useful result from their
paper (Lemma \ref{conditional_partition} below). Combining these results
together yields the original version of the conditional regularity lemma used in
the original proof of Theorem \ref{thm:AlonFischerNewman1} in
\cite{AlonFischerNewman2007}. It is worth to note that even though Theorem
\ref{thm:removal_row_perm} generalizes Theorem \ref{thm:AlonFischerNewman1}, for its
proof we only need the simpler Lemma \ref{conditional_clustering} and not the
original regularity lemma, whose proof requires significantly more work. Lemma
\ref{conditional_partition} is only used in the proof of Theorem
\ref{thm:disjoint_copies}.

We start with some definitions. 
A $(\delta, r)$-\emph{row-clustering} of an $n \times n$ matrix $M$ is a
partition of the set of rows of $M$ into $r+1$ \emph{clusters} $R_0, \ldots,
R_r$ such that the \emph{error cluster} $R_0$ satisfies $|R_0| \leq \delta n$
and for any $i=1, \ldots, r$, every two rows in $R_i$ differ in at most $\delta
n$ entries. That is, for every $e,e' \in R_i$, one can make row $e$ equal to
$e'$ by modifying at most $\delta n$ entries. A \emph{$(\delta,
	r)$-column-clustering} is defined analogously on the set of columns of $M$.
The first conditional regularity lemma states the following.
\begin{lemma}[\cite{AlonFischerNewman2007}]
	\label{conditional_clustering}
	Let k be a fixed positive integer and let $\delta > 0$ be a small real. For
	every $n \times n$ binary matrix $M$ with $n > (k/\delta)^{O(k)}$, either $M$
	admits $(\delta, r)$-clusterings for both the rows and the columns with $r \leq
	(k/\delta)^{O(k)}$, or for every $k \times k$ binary matrix $A$, at least a
	$(\delta/k)^{O(k^2)}$ fraction of the $k \times k$ submatrices of $M$ are copies
	of $A$.
\end{lemma}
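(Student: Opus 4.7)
The plan is to prove the contrapositive: assume $M$ fails to admit a $(\delta, r)$-clustering on at least one of its two sides for every $r \leq r^* := (k/\delta)^{O(k)}$; I then want to show every binary $k \times k$ matrix $A$ appears in a $(\delta/k)^{O(k^2)}$ fraction of the $k \times k$ submatrices of $M$. By transposing if needed, assume the row side fails. A greedy extraction now produces many pairwise-far rows: iteratively pick $u_1, u_2, \ldots \in M$ with each new $u_i$ at Hamming distance $> \delta n / 2$ from all previously picked. Should the procedure halt within $r^*$ picks, the balls of radius $\delta n/2$ about the picks cover every row of $M$ (else the procedure would continue); since each such ball has diameter at most $\delta n$ by the triangle inequality, this yields a $(\delta, r^*)$-row-clustering with empty error cluster, contradicting the assumption. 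Hence one obtains a set $U$ of $N := r^* + 1$ rows pairwise at Hamming distance $> \delta n / 2$.

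\textbf{Building a balanced column partition.} The combinatorial heart of the proof is to find $k$ rows $u^{(1)}, \ldots, u^{(k)} \in U$ whose induced partition of $[n]$ into $2^k$ column-pattern classes $\{C_s : s \in \{0,1\}^k\}$ is balanced, meaning $|C_s| \geq \eta n$ for every $s$, with $\eta = (\delta/k)^{\Theta(k)}$. I would build this inductively: given $u^{(1)}, \ldots, u^{(i)}$ and their $2^i$ induced classes, each of size at least $\eta_i n$, choose $u^{(i+1)} \in U$ that splits every large $C_s$ into two parts each of size $\geq \beta |C_s|$, with $\beta = (\delta/k)^{\Theta(1)}$. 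Existence is established by a pigeonhole-and-contradiction argument: if every $u \in U$ were $\beta$-unbalanced on some $C_s$ (taking a single value on more than $(1-\beta)|C_s|$ columns of $C_s$), then at least $N/2$ rows of $U$ would agree on a single constant over most of $C_s$; translated to the columns in $C_s$ viewed as vectors (indexed by $U$), this would force a column structure within $C_s$ convertible to a non-trivial clustering of the matrix, contradicting one of the failure hypotheses. Union-bounding over the at most $2^k$ classes per step and iterating $k$ times yields $\eta = \beta^k = (\delta/k)^{O(k)}$.

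\textbf{Counting $A$-copies via blow-up.} A single good $k$-row selection contributes only $O(n^k)$ $A$-copies, a vanishing fraction of the $\binom{n}{k}^2 \approx n^{2k}$ $k \times k$ submatrices; to aggregate I would blow each $u^{(j)}$ up. Let $V_j$ be the set of rows of $M$ within Hamming distance $\eta n / (4k)$ of $u^{(j)}$, so that for any $(v^{(1)}, \ldots, v^{(k)}) \in V_1 \times \cdots \times V_k$, the induced column partition agrees with $\{C_s\}$ on all but at most $\eta n / 4$ columns; in particular every class still has size $\geq \eta n / 2$. Pigeonholing the rows of $M$ into the balls around $U$ makes $|V_j| \gtrsim n/N$ on average. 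Thus there are $\prod_j |V_j| \gtrsim (n/N)^k$ blow-up $k$-row selections, each contributing $\Omega((\eta n)^k)$ $A$-copies (with the column-ordering constraint handled by a symmetry/averaging argument over orderings). Multiplying through gives $(\delta/k)^{O(k^2)} n^{2k}$ $A$-copies, which is the claimed fraction.

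The principal obstacle is the pigeonhole-to-contradiction argument in Step 2: quantitatively relating $\beta$, $\eta_i$, and the clustering-failure hypothesis so that "every row of $U$ is $\beta$-unbalanced on some $C_s$" genuinely produces a clustering structure that contradicts the assumption, while keeping the loss polynomial in $\delta^{-1}$ and $k$ at each of the $k$ recursive steps. The original Alon--Fischer--Newman proof threads this through carefully, in particular exploiting the row and column sides of the dichotomy in tandem, which is what accounts for the symmetric "for both the rows and the columns" phrasing in the lemma.
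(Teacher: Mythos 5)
First, note that the paper you are working from does not prove this lemma at all: it is quoted verbatim from Alon--Fischer--Newman \cite{AlonFischerNewman2007}, and the text explicitly defers to that paper for the proof. So the comparison can only be against the known AFN argument, whose high-level skeleton (greedy extraction of pairwise-far rows, a $k$-row selection inducing a dense column-pattern structure, then a counting step) your sketch does capture correctly, including the correct observation that only the row side of the dichotomy needs to fail.

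The central inductive step, however, is not just unproved but false as stated. Take $M$ whose right $n/2$ columns are identically $1$ and whose left half is a uniformly random binary matrix. The rows are pairwise at distance about $n/4$, so the row clustering fails and $U$ can be taken to be all rows; the left-half columns are also pairwise far, so the column clustering fails too. Yet if $u^{(1)}$ happens to be (close to) the indicator of the right half, the class $C_1$ is the right half, on which \emph{every} row of $U$ is constant; no $u^{(i+1)}$ splits $C_1$ into two parts of size $\geq \beta\lvert C_1\rvert$ for any $\beta>0$, and neither failure hypothesis is contradicted (the columns inside $C_1$ form a perfectly legitimate single cluster, which says nothing about clustering all of $M$'s columns). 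The lemma is still true for this $M$ --- all patterns appear densely using left-half columns --- which shows the correct invariant cannot be ``every class of the current partition gets refined''; one must allow classes to go dead and instead track, for each target pattern, a large column set realizing it, and even then a naive ``pigeonhole and recurse'' costs a factor exponential in $2^k$ per dead class, which accumulates to a bound exponential in $\delta^{-1}$ rather than the claimed $(k/\delta)^{O(k)}$. A second genuine gap is the blow-up: the greedily extracted rows $u^{(j)}$ need not be popular. For a uniformly random $M$ every Hamming ball of radius $\eta n/(4k)$ around a row contains only that row, so $\prod_j\lvert V_j\rvert=1$ and your count degenerates to $O(n^k)$ copies, a vanishing fraction of the $\approx n^{2k}$ submatrices; ``$\lvert V_j\rvert\gtrsim n/N$ on average'' does not rescue this. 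The AFN proof avoids both problems by showing directly that a positive fraction of \emph{all} $k$-tuples of rows induce the required dense pattern classes, via a more delicate counting argument than the extraction-plus-blow-up scheme proposed here.
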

Let $R$ be a set of rows and let $C$ be a set of columns in an $n \times n$
matrix $M$. The \emph{block} $R \times C$ is the submatrix of $M$ on $R \times
C$.
A block $B$ is \emph{$\delta$-homogeneous} with \emph{value} $b$ if there exists
$b \in \{0,1\}$ such that at least a $1-\delta$ fraction of the entries of $B$
are equal to $b$.
A $(\delta, r)$-\emph{partition} of $M$ is a couple $(\mathcal{R}, \mathcal{C})$
where $\mathcal{R} = \{ R_1, \ldots, R_r\}$ is a partition of the set of rows
and $\mathcal{C} = \{ C_1, \ldots, C_r\}$ is a partition of the set of columns
of $M$, such that all but a $\delta$-fraction of the entries of $M$ lie in
blocks $R_i \times C_j$ that are $\delta$-homogeneous.
The second result that we need from \cite{AlonFischerNewman2007}, relating
clusterings and partitions of a matrix, is as follows.
\begin{lemma}[\cite{AlonFischerNewman2007}]
	\label{conditional_partition}
	Let $\delta > 0$. If a square binary matrix $M$ has $(\delta^2/16,
	r)$-clusterings $\mathcal{R}, \mathcal{C}$ of the rows and the columns
	respectively then $(\mathcal{R}, \mathcal{C})$ is a $(\delta, r+1)$-partition of
	$M$.
\end{lemma}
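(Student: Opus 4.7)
The plan is to show that for a suitable choice of representative rows $e_i\in R_i$ and columns $f_j\in C_j$ (over the non-error clusters $i,j\geq 1$), the target value $b_{ij}:=M[e_i,f_j]$ describes block $R_i\times C_j$ well: the total number of entries in non-error blocks that disagree with their target is at most $(\delta^2/8)n^2$. A Markov-type step will then convert this into the $\delta n^2$ bound on the total area of non-$\delta$-homogeneous blocks required by the $(\delta,r+1)$-partition definition, after adding the small contribution of blocks that touch the error clusters $R_0$ and $C_0$.

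The key counting estimate proceeds by triangle inequality through the corner $M[e,f_j]$: any $(e,f)\in R_i\times C_j$ with $M[e,f]\neq b_{ij}$ must satisfy either (i)~$M[e,f]\neq M[e,f_j]$ (so columns $f,f_j\in C_j$ disagree at row $e$) or (ii)~$M[e,f_j]\neq M[e_i,f_j]$ (so rows $e,e_i\in R_i$ disagree at column $f_j$). The type-(i) contribution is easy: since $f$ and $f_j$ lie in the same column-cluster, they disagree in at most $(\delta^2/16)n$ rows, and straightforward double-counting gives a global bound of $(\delta^2/16)n^2$ on type-(i) bad entries, independent of the choice of representatives. Type~(ii) is the \emph{main obstacle}: the naive per-block bound depends on how column $f_j$ happens to interact with $R_i$, and summing block-by-block would cost an extra factor of $r$. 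The fix I propose is to choose each $f_j$ uniformly at random in $C_j$ (independently, with the $e_i$'s fixed arbitrarily). Then the expected type-(ii) count for block $R_i\times C_j$ equals $\sum_{f\in C_j}|\{e\in R_i:M[e,f]\neq M[e_i,f]\}|$, and summing over $j\geq 1$ and swapping orders gives at most $\sum_{e\in R_i}|\{f:M[e,f]\neq M[e_i,f]\}|\leq|R_i|(\delta^2/16)n$ by the row-clustering property. Summing over $i\geq 1$ yields total expected type-(ii) count at most $(\delta^2/16)n^2$, so some realization $\{f_j\}$ achieves the overall bound $(\delta^2/8)n^2$ on bad entries.

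The final step is Markov's inequality: the blocks $R_i\times C_j$ (with $i,j\geq 1$) that fail to be $\delta$-homogeneous with value $b_{ij}$ cover at most $(\delta^2/8)n^2/\delta=\delta n^2/8$ entries in total. The entries in blocks that touch $R_0$ or $C_0$ contribute at most $|R_0|n+n|C_0|\leq 2(\delta^2/16)n^2\leq\delta n^2/8$ more (using $\delta\leq 1$). Altogether, at most $\delta n^2/4<\delta n^2$ entries of $M$ lie in non-$\delta$-homogeneous blocks of the partition $(\mathcal{R},\mathcal{C})$, which is exactly what the $(\delta,r+1)$-partition definition requires.
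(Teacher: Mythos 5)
Your proof is correct. Note that the paper itself does not prove this lemma: it is quoted from Alon--Fischer--Newman and the proof is explicitly deferred to that reference, so there is no in-paper argument to compare against. Your argument is self-contained and sound: the triangle-inequality split through the corner $M[e,f_j]$ is the right decomposition, the type-(i) count is indeed bounded by $(\delta^2/16)n^2$ uniformly over all choices of representatives, and you correctly identify that a fixed (adversarial) choice of $f_j$ could concentrate all row-disagreements on the representative column and ruin the type-(ii) count; averaging over a uniformly random $f_j\in C_j$ removes the spurious factor of $r$ and gives the expected bound $\sum_{e\in R_i}|\{f: M[e,f]\neq M[e_i,f]\}|\le |R_i|(\delta^2/16)n$ per row cluster. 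The Markov step and the accounting for the error clusters $R_0, C_0$ (which are exactly the $(r+1)$-st parts of the partition) are also correct, with the harmless convention $\delta\le 1$.
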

For the proofs of the above lemmas see \cite{AlonFischerNewman2007}.
We continue to the proof of Theorem \ref{thm:disjoint_copies}. The following lemma
is a crucial part of the proof.
\begin{lemma}
	\label{lem:separated}
	Fix an $s \times t$ matrix $A$. For any $\epsilon > 0$ there exists $\tau > 0$,
	where $\tau^{-1}$ is polynomial in $\epsilon^{-1}$,
	such that any $n \times n$ matrix $M$ containing $\epsilon n^2$ pairwise-disjoint copies of $A$ either
	contains $\tau n^{s+t}$ copies of any $s \times t$ matrix, or there exist
	subsets of indices $X,Y$ of sizes $s-1, t-1$ respectively such that $M$ contains
	$\tau n^2$ pairwise disjoint copies of $A$ that are separated by $X \times Y$.
\end{lemma}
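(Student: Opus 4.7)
The plan is to apply the conditional regularity lemma (Lemma \ref{conditional_clustering}) to $M$ with $k = \max(s, t)$ and $\delta$ polynomial in $\epsilon$, and split into two cases. If $M$ does not admit the guaranteed $(\delta, r)$-row- and column-clusterings, then every $k \times k$ binary matrix occurs as a $(\delta/k)^{O(k^2)}$-fraction of the $k \times k$ submatrices of $M$. Since any prescribed $s \times t$ matrix is a submatrix of some $k \times k$ matrix, this directly yields $\tau n^{s+t}$ copies of every $s \times t$ matrix (for $\tau$ polynomial in $\epsilon$), giving the first alternative of the lemma.

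In the remaining case, $M$ admits $(\delta, r)$-row- and column-clusterings with $r$ polynomial in $\epsilon^{-1}$. The plan is to build the separators by two successive pigeonhole steps on the disjoint copies. First, after discarding the $O(\delta n^2)$ original disjoint copies that use the error cluster $R_0$ or $C_0$ (a vanishing fraction when $\delta$ is chosen small enough in terms of $\epsilon$), a pigeonhole over the $r^{s+t}$ possible cluster patterns yields $\tau_1 n^2 = \epsilon n^2 / (2 r^{s+t})$ disjoint copies sharing a cluster pattern $\pi^* = (\rho_1, \ldots, \rho_s; \gamma_1, \ldots, \gamma_t)$. Second, I partition $[n]$ into $K$ equal consecutive intervals (with $K$ polynomial in $\epsilon^{-1}$) and apply pigeonhole over interval patterns. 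Copies sharing a \emph{strict} interval pattern --- one where the row intervals $k_1 < \cdots < k_s$ are pairwise distinct and the column intervals $m_1 < \cdots < m_t$ are pairwise distinct --- are automatically separated by the $s-1$ row and $t-1$ column interval boundaries, yielding the desired $X, Y$.

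The main obstacle is showing that some strict (cluster, interval) pattern hosts $\tau n^2$ disjoint copies. A direct entry-counting argument in the slab $I_k \times [n]$ shows that at most $n^2/(2tK)$ disjoint copies can have both $r_i$ and $r_{i+1}$ in a given interval $I_k$ of size $n/K$ (each uses $2t$ entries in the slab); summing over the $(s-1)K$ choices of position and interval gives at most $(s-1) n^2 / (2t)$ disjoint copies with a row collision, and analogously for columns. This immediately suffices when $\tau_1$ exceeds $(s-1)/t + (t-1)/s$, but for smaller $\epsilon$ I plan to refine the argument using the cluster pattern $\pi^*$, the lower bound $|R_{\rho_i^*}| \ge t \tau_1 n$ (forced by the disjoint-copies row-usage constraint), and (after applying Lemma \ref{lem:foldable_to_unfoldable_lem} to reduce to unfoldable $A$) the fact that same-cluster consecutive rows in a copy must use one of the at most $\delta n$ columns where the two rows disagree. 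Balancing $r$, $K$, and $\delta$ in these bounds to secure a polynomial dependence on $\epsilon^{-1}$ is the central technical challenge.
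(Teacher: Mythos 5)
There is a genuine gap, and it lies exactly where you flag ``the central technical challenge'': your argument only ever looks at the \emph{originally given} disjoint copies and tries to show that many of them already have a strict interval pattern. That is false in general. Consider the unfoldable matrix $A=\left(\begin{smallmatrix}1&0\\0&1\end{smallmatrix}\right)$ and the $n\times n$ matrix $M$ tiled by $2\times 2$ blocks equal to $A$ (so $M(i,j)=1$ iff $i\equiv j \pmod 2$). Here $M$ has perfect $(0,2)$-clusterings, and the natural family of $n^2/4$ pairwise disjoint copies consists entirely of copies whose two rows are consecutive and whose two columns are consecutive. For any partition of $[n]$ into $K$ intervals, all but an $O(K/n)$-fraction of these copies have both rows in one interval and both columns in one interval, so no strict interval pattern hosts more than $O(Kn)$ of them. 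Your fallback refinement (consecutive rows of a copy lying in the same cluster must meet a disagreement column) does not apply, since the two rows of each copy lie in \emph{different} clusters. Your direct entry-count bound of $(s-1)n^2/(2t)$ colliding copies is, as you note, vastly larger than $\tau_1 n^2=\epsilon n^2/(2r^{s+t})$, so it gives nothing. The conclusion of the lemma is still true in this example, but only because $M$ contains \emph{other} disjoint separated copies, not among the original ones; a proof that only filters the given family cannot find them.

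The missing idea is that the separated copies must be \emph{constructed}, and the clustering is the tool for doing so. The paper proceeds row by row: having fixed $i-1$ separators and a large separated family, it finds a row cluster containing many rows that serve as the $i$-th row of many copies, splits those rows into a lower half $R_i^1$ and an upper half $R_i^2$, places the separator $x_i$ at the top of $R_i^1$, and then, for matched pairs $(r,r')$ with $r\in R_i^2$, $r'\in R_i^1$, \emph{replaces} the $i$-th row $r$ of each relevant copy by $r'$. Since $r$ and $r'$ lie in the same cluster they differ in at most $\delta_{i-1}n/4$ entries, so all but a small fraction of the shifted submatrices remain $A$-copies, and their $i$-th row is now on the correct side of $x_i$ while the $(i+1)$-th row is beyond it. This produces a new, still pairwise disjoint, separated family at each step with only a polynomial loss. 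Also note that your proposed reduction to unfoldable $A$ via Lemma \ref{lem:foldable_to_unfoldable_lem} is not compatible with this lemma's conclusion as stated: that lemma converts \emph{total counts} of $\tilde A$-copies into total counts of $A$-copies, whereas here you need pairwise disjoint copies of $A$ itself separated by $X\times Y$ (which is what the proof of Theorem \ref{thm:disjoint_copies} consumes); a $1\times 1$ folding, for instance, carries no separation information at all.
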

Before providing the full proof of Lemma \ref{lem:separated}, we present a sketch of the proof.
	Clearly, whenever we apply Lemma \ref{conditional_clustering} throughout the proof, we may assume that the outcome is that $M$ has suitable row and column clusterings, as the other possible outcome of Lemma \ref{conditional_clustering} finishes the proof immediately.
	The main idea of the proof is to gradually find row separators,
	and then column
	separators, while maintaining a large set of pairwise disjoint copies of $A$ that
	conform to these separators. This is done inductively (first for the rows, and then for the columns). The inductive step is described in what follows.
	
	Assume we currently have $j-1 \geq 0$ row-separators, and a set $
	\mathcal{A}$ of many pairwise disjoint $A$-copies that have their first $j$
	rows separated by these row-separators. We take a clustering of the rows of $M$, and consider a cluster in which many rows are ``good'', in the sense that they contain the $j$-th row of many of the disjoint $A$-copies from $\mathcal{A}$. 
	We put our $j$-th separator as the medial row among the good rows.
	Next, we consider a matching of pairs $(r_1, r_2)$ of good rows, where in each such pair $r_1$ lies before the $j$-th separator and $r_2$ lies after the $j$-th separator. Observe that all good rows lie after the $(j-1)$-th separator.
	
	If we take all pairwise-disjoint $A$-copies from $\mathcal{A}$ whose $j$-th row is $r_2$, and ``shift'' their $j$-th row to be $r_1$, then most of them will still be $A$-copies (as rows $r_1$ and $r_2$ are very similar, since they are in the same row cluster).
	This process creates a set $\mathcal{A}'$ of many pairwise disjoint $A$-copies whose $i$-th row lies between separators $i-1$ and $i$ for any $i \leq j$, and the $(j+1)$-th row lies after separator $j$. This finishes the inductive step. 

We now continue to the full proof of Lemma \ref{lem:separated}.

\begin{proof}[Proof of Lemma 
\ref{lem:separated}]
	Let $\epsilon > 0$ and let $M$ be a large enough $n
	\times n$ binary matrix containing a collection $U_0$ of $\epsilon n^2$ pairwise disjoint $A$-copies.
	
	We prove the following claim by induction on $i$, for $i=0,1,\ldots,s-1$:
	there exist $\tau_i, \delta_i$ with $\tau_i^{-1},\delta_i^{-1}$ polynomial in
	$\epsilon^{-1}$ such that either
	 $M$ contains $\tau_i n^{s+t}$ copies of any $s \times t$
	matrix or there exist $0 = x_0 < x_1 < \ldots < x_i$ and a set $U_i$ of
	$\delta_i n^2$ pairwise disjoint $A$-copies in $M$ whose $j$-th row is bigger
	then $x_{j-1}$ and no bigger than $x_j$ for any $1 \leq j \leq i$, and the
	$(i+1)$-th row is bigger than $x_i$. The base case $i=0$ is trivial with
	$\delta_0 = \epsilon$. Suppose now that $i \geq 1$ and that $x_0, \ldots,
	x_{i-1}$, $\delta_{i-1}$ and $U_{i-1}$ are already determined. Applying lemma
	\ref{conditional_clustering} on $M$ with parameters $k = \max\{s,t\}$ and
	$\delta_{i-1}/4$, either $M$ contains $\tau_i n^{s+t}$ copies of any $s \times t$ matrix
	with $\tau_i^{-1}$ polynomial in $\epsilon^{-1}$ and we are done, or $M$ has a
	$(\delta_{i-1}/4, r_i)$-row-clustering $\mathcal{R}_i$ of $M$ for $r_i$
	polynomial in $\delta_{i-1}^{-1}$ and so in $\epsilon^{-1}$. The number of rows
	of $M$ that contain the $i$-th row of at least $\delta_{i-1}n /2$ of the
	$A$-copies in $U_{i-1}$ is at least $\delta_{i-1} n / 2$, since the number of
	$A$-copies in $U_{i-1}$ whose $i$-th row is not taken from such a row of $M$ is
	less that $n \cdot \delta_{i-1}n /2 = \delta_{i-1}n^2/2$. Let $R_i$ be a row
	cluster that contains at least $\delta_{i-1} n/ 2r_i$ such rows. Note that all
	of these rows are bigger than $x_{i-1}.$ Take subclusters $R_i^1, R_i^2$ of
	$R_i$, each containing at least $\lfloor \delta_{i-1} n/ 4r_i \rfloor \geq
	\delta_{i-1} n/ 5r_i $ such rows (the inequality holds for $n$ large enough)
	where each row in $R_i^1$ is smaller than each row in $R_i^2$. Take $x_i$ to be
	the row index of the biggest row in $R_i^1$.
	
	Take arbitrarily $\delta_{i-1} n/ 5r_i$ couples of rows $(r,r')$ where $r \in
	R_i^2$ and $r' \in R_i^1$ and every row participates in at most one couple. Let
	$(r,r')$ be such a couple. There exist $\delta_{i-1} n / 2$ $s \times t$
	submatrices of $M$ that are $A$-copies from $U_{i-1}$ and whose $i$-th row is
	$r$. Moreover, for any $j < i$ the $j$-th row of each of these submatrices lies
	between $x_{j-1}$ (non-inclusive) and $x_j$ (inclusive). Since $r$ and $r'$
	differ in at most $\delta_{i-1} n / 4$ entries, there are at least $\delta_{i-1}
	n / 4$ such submatrices $T$ that satisfy the following: If we modify $T$ by
	taking its $i$-th row to be $r'$ instead of $r$, $T$ remains an $A$-copy.
	Moreover, after the modification, the $i$-th row of $T$ is in $R_i^1$ and is
	therefore no bigger than $x_i$, whereas the $(i+1)$-th row of $T$ is bigger than
	the $i$-th row of $T$ before the modification which is bigger than $x_i$, as
	needed.
	For every couple $(r,r')$ we can produce $\delta_{i-1} n / 4$ pairwise disjoint
	copies of $A$ whose $j$-th row is between $x_{j-1}$ and $x_j$ for any $j \geq i$
	and the $(i+1)$-th row is after $x_i$. There are $\delta_{i-1} n/ 5r_i$ such
	couples $(r,r')$, and in total we get a set $U_i$ of
	$\delta_i n^2$ copies of $A$ with the desired structure for $\delta_i =
	\delta_{i-1}^2 / 20r_i$ where $\delta_i^{-1}$ is polynomial in
	$\delta_{i-1}^{-1}$ and so in $\epsilon^{-1}$.
	Note that the copies in $U_i$ are pairwise disjoint.
	In the end of the process there is a set $U = U_s$ of $\delta_s n^2$ pairwise
	disjoint copies of $A$ whose rows are separated by $X = \{x_1, \ldots,
	x_{s-1}\}$. A feature that is useful in what follows is that each copy in $U$
	has exactly the same set of columns (as a submatrix of $M$) as one of the
	original copies of $U_0$.
	
	Now we apply the same process as above but in columns instead of rows, starting
	with the $\delta_s n^2$ copies in $U$. In the end of the process, we obtain that
	for some $\hat{\tau}_t, \hat{\delta}_t$ such that $\hat{\tau}_t^{-1}$ and
	$\hat{\delta}_t^{-1}$ are polynomial in $\delta_s^{-1}$ and so in
	$\epsilon^{-1}$, either $M$ contains $\hat{\tau}_t n^{s+t}$ copies of any $s \times t$ matrix, or there exists a set $\hat{U}$ of $\hat{\delta}_t n^2$ pairwise disjoint
	copies of $A$ whose columns are separated by a set of indices $Y$ of size $t-1$.
	Moreover, by the above feature, each of the copies in $\hat{U}$ has the same set
	of rows as some copy of $A$ from $U$, so each copy has its rows separated by
	$X$. Hence $X \times Y$ separates all copies in $\hat{U}$. Taking $\tau =
	\min\{\hat{\tau}_t, \hat{\delta}_t\}$ finishes the proof.
\end{proof}
Next we show how Theorem \ref{thm:disjoint_copies} follows from Lemma
\ref{lem:separated}.
The idea of the proof is to show, using Lemmas \ref{conditional_partition} and \ref{lem:separated}, that there is a partition of $M$ with blocks $R_i \times C_j$ (for $1 \leq i \leq s$, $1 \leq j \leq t$) satisfying the following.
\begin{itemize}
\item All row clusters $R_i$  and all column clusters $C_j$ are large enough.
\item All rows of $R_i$ ($C_j$) lie before all rows (columns) of $R_{i+1}$ ($C_{j+1}$ respectively) for any $i$ and $j$.
\item $R_i \times C_j$ is almost homogeneous, and its ``popular'' value is $A_{ij}$.
\end{itemize}
Using these properties, it is easy to conclude that $M$ contains many $A$-copies.

We now complete the proof of Theorem \ref{thm:disjoint_copies}.

\begin{proof}[Proof of Theorem \ref{thm:disjoint_copies}]
	Let $A$ be an $s \times t$ binary matrix and let $k = \max\{s,t\}$. Let $\epsilon > 0$ and let $M$ be a large enough $n
	\times n$ binary matrix that contains $\epsilon n^2$ pairwise disjoint $A$-copies.
	Lemma \ref{lem:separated} implies that either 
	$M$ contains $\tau n^{s+t}$ copies of $A$ where
	$\tau^{-1}$ is polynomial in $\epsilon^{-1}$ (in this case we are done), or $M$
	contains at least $\tau n^2$ pairwise disjoint copies of $A$ separated by $X
	\times Y$ for suitable index subsets $X,Y$. By Lemma
	\ref{conditional_clustering} we get that either $M$ has
	$(\tau^2/128,r)$-clusterings of the rows and the columns where $r$ is polynomial
	in $\tau^{-1}$ and so in $\epsilon^{-1}$, or at least a $\zeta =
	(\tau^2/128k)^{O(k^2)}$ fraction of the $s \times t$ submatrices are $A$; in the
	second case we are done. Suppose then that $M$ has $(\tau^2/128,r)$-clusterings
	$\mathcal{R}, \mathcal{C}$ of the rows, columns respectively.
	The next step is to create refinements of the clusterings. Write the elements of
	$X$ as $x_1 < \ldots < x_{s-1}$ and let $x_0 = 0, x_s = n$. Partition each $R
	\in \mathcal{R}$ into $s$ parts where the $i$-th part for $i=1,\ldots,s$
	consists of all rows in $R$ with index at least $x_{i-1}$ and less than $x_i$.
	Each such part is also a $\tau^2/128$-cluster.
	Now separate each $C \in \mathcal{C}$ into $t$ parts in a similar fashion.
	This creates $(\tau^2/128, (r+1)k)$-clusterings $\mathcal{R'}, \mathcal{C'}$ of
	the rows and the columns respectively (where some of the clusters might be
	empty). By Lemma \ref{conditional_partition}, $\mathcal{P} = (\mathcal{R'},
	\mathcal{C'})$ is a $(\tau/4, r')$-partition of $M$ where $r' = (r+1)k+1$, and
	each block of the partition has all of its entries between two neighboring row
	separators from $X$ and between two neighboring column separators from $Y$.
	
	There are at most $\tau n^2 / 4$ entries of $M$ that lie in
	non-$\tau/4$-homogeneous blocks of $\mathcal{P}$ and at most $\tau n^2 / 4$
	entries of $M$ that lie in $\tau/4$-homogeneous blocks of $\mathcal{P}$ but do
	not agree with the value of the block. Therefore, the number of entries as above
	is no more than $\tau n^2 / 2$, and so there exists a set of $\tau n^2 / 2$
	pairwise disjoint copies of $A$ in $M$ separated by $X \times Y$ in which all
	the entries come from $\tau/4$-homogeneous blocks and agree with the value of
	the block in which they lie.
	Hence there exist sets of rows $R_1, \ldots, R_s \in \mathcal{R'}$ and sets of
	columns $C_1, \ldots, C_t \in \mathcal{C'}$ and a collection $\mathcal{A}$ of
	$\tau n^2 / 2(r')^{2k}$ pairwise disjoint 
	$A$-copies separated by $X \times Y$ such that for any
	$1 \leq i \leq s$, $1 \leq j \leq t$, the block $R_i \times C_j$ is
	$\tau/4$-homogeneous, has value $A(i,j)$, lies between row separators $x_{i-1}$
	and $x_i$ and between column separators $y_{j-1}$ and $y_j$, and contains the
	$(i,j)$ entry of any $A$-copy in $\mathcal{A}$. This implies that $|R_i|, |C_j|
	\geq \tau n / 2(r')^{2k}$ for any $1 \leq i \leq s$ and $1 \leq j \leq t$, So
	there are $(\tau / 2(r')^{2k})^{s+t}n^{s+t}$ $s \times t$ submatrices of $M$
	whose $(i,j)$ entry lies in $R_i \times C_j$ for any $i,j$. Picking such a
	submatrix $S$ at random, the probability that $S(i,j) \neq A(i,j)$ for a
	specific couple $i,j$ is at most $\tau / 4$; thus $S$ is equal to $A$ with
	probability at least $1 - st\tau / 4 > 1/2$ for small enough $\tau$. Hence the number of $A$-copies in $M$ is at least$(\tau / 2(r')^{2k})^{s+t} n^{s+t} / 2$.
\end{proof}

Next we give the proof of Theorem \ref{thm:removal_row_perm}. For the proof, recall
the definition of an unfoldable matrix and a folding of a matrix from Section
\ref{sec:more_notation}. A family of matrices is \emph{unfoldable} if all matrices in it
are unfoldable. The \emph{folding} of a finite family $\mathcal{F}$ of matrices is the set
$\tilde{\mathcal{F}} = \{\tilde{A} : A \in \mathcal{F}\}$ of the foldings of the matrices in $\mathcal{F}$.
Observe that $\tilde{\mathcal{F}}$ is unfoldable for any family $\mathcal{F}$.
Note that if $\mathcal{F}$ is closed under (row) permutations then $\tilde{\mathcal{F}}$ is also
closed under (row) permutations.

We start with a short sketch of the proof, before turning to the full proof:
As before, we may assume that our matrix $M$ has a row clustering with suitable parameters.
We may also assume that the forbidden family is unfoldable.
Consider a submatrix $Q$ of $M$ that contains exactly one ``representative'' row from any large enough row cluster. The crucial idea is that if $Q$ does not contain many $A$-copies, then $M$ is close to $\mathcal{F}$-freeness. Indeed, one can modify all rows in $M$ to be equal to rows from $Q$ without making many entry modifications, and after this modification, it is possible to eliminate all $\mathcal{F}$-copies in $M$ (without creating new $\mathcal{F}$-copies) by only modifying those columns in $M$ that participate in some $\mathcal{F}$-copy in $Q$; if $Q$ does not contain many $\mathcal{F}$-copies then the number of such columns is small.
Since the above statement is true for any possible choice of $Q$, we conclude that if $M$ is $\epsilon$-far from $\mathcal{F}$-freeness then it must contain many $A$-copies.

\begin{proof}[Proof of Theorem \ref{thm:removal_row_perm}]
	It is enough to prove the statement of the theorem only for
	\emph{unfoldable} families that are closed under row permutations. Indeed,
	suppose that Theorem \ref{thm:removal_row_perm} is true for all unfoldable families
	that are closed under row permutations.
	Let $\mathcal{F}$ be a family of binary matrices that is closed under row permutations and
	let $\tilde{\mathcal{F}}$ be its folding.
	Then for any $\epsilon > 0$ there exists $\tilde{\delta} > 0$ such that any
	square binary matrix $M$ which is $\epsilon$-far from $\tilde{\mathcal{F}}$-freeness 
	contains $\tilde{\delta} n^{s'+t'}$ copies of some $s' \times t'$ matrix $B \in \tilde{\mathcal{F}}$, where $\tilde{\delta}^{-1}$ is polynomial in
	$\epsilon^{-1}$.
	Thus, provided that $M$ is large enough (i.e.\@ that it is an $n \times n$ matrix where $n \geq n_0$ for a suitable choice of $n_0$ polynomial in $\epsilon^{-1}$), 
	we can apply Lemma \ref{lem:foldable_to_unfoldable_lem}
	to get that $M$ also contains $\delta n^{s+t}$ copies of the matrix $A \in \mathcal{F}$ whose folding is $B$, for a small enough $\delta > 0$ where $\delta^{-1}$ is polynomial in $\epsilon^{-1}$.
	
	Therefore, suppose that $\mathcal{F}$ is an
	unfoldable finite family of binary matrices that is closed under row
	permutations.
	Let $k$ be the maximal row or column dimension of a matrix from $\mathcal{F}$. Let
	$\epsilon > 0$ and apply Lemma \ref{conditional_clustering} with parameters $k$
	and $\epsilon/6$. Let $M$ be a large enough $n \times n$ matrix with $n
	> (k/\epsilon)^{O(k)}$, then either $M$ contains $\delta_2 n^{2k}$ copies of any $k \times k$
	matrix, where $\delta_2^{-1}$ is polynomial in $\epsilon^{-1}$, or $M$ has an $(\epsilon/6, r)$-clustering of
	the rows with $r$ polynomial in $\epsilon^{-1}$.
	In the first case we are done, so suppose that $M$ has an $(\epsilon/6, r)$-clustering
	$\mathcal{R} = \{R_0, \ldots, R_r\}$ of the rows where $R_0$ is the error
	cluster.
	
	Suppose that $M$ is $\epsilon$-far from $\mathcal{F}$-freeness. 
	We say that a cluster $R \neq R_0$ in $\mathcal{R}$ is \emph{large} if it
	contains at least $\epsilon n / 6 r$ rows. Note that the total number of entries
	that do not lie in large clusters is at most $\epsilon n / 6 + \epsilon n / 6 =
	\epsilon n / 3$.
	Pick an arbitrary row $r(R)$ from every large cluster $R \in \mathcal{R}$ and
	denote by $Q$ the submatrix of $M$ created by these rows.
	Let $\mathcal{A}(Q)$ be a collection of pairwise disjoint copies of matrices
	from $\mathcal{F}$ in $Q$ that has the maximal possible number of copies.
	Suppose to the contrary that $|\mathcal{A}| \leq \epsilon n / 3 k$ and let $C$
	be the set of all columns of $M$ that intersect a copy from $\mathcal{A}$, then
	$C$ contains no more than $\epsilon n / 3$ columns.
	We can modify $M$ to make it $\mathcal{F}$-free as follows: 
	First modify every row that lies in a large cluster $R \in \mathcal{R}$ to be
	equal to $r(R)$. Then pick some row $r$ of $Q$ and modify all rows that are not
	contained in large clusters to be equal to $r$. Finally do the following: As
	long as $C$ is not empty, pick a column $c \in C$ that has a neighbor
	(predecessor or successor) not in $C$ and modify $c$ to be equal to its
	neighbor, and then remove $c$ from $C$.
	
	It is not hard to see that since $\mathcal{F}$ is unfoldable and closed under row
	permutations, after these modifications $M$ is $\mathcal{F}$-free. Indeed, after the first
	and the second steps, all rows of $M$ are equal to rows from $Q$; the order of
	the rows does not matter since $\mathcal{F}$ is closed under row permutations. 
	Now each time
	that we modify a column $c \in C$ in the third step, all copies of matrices from
	$\mathcal{F}$ that intersect it are destroyed and no new copies are created. By the
	maximality of $\mathcal{A}$, any copy of a matrix from $\mathcal{F}$ in the original $Q$
	intersected some column from $C$, so we are done.
	The number of entry modifications needed in the first, second, third step
	respectively is at most $\epsilon n^2 / 6$, $\epsilon n^2 / 3$, $\epsilon n^2 /
	3$ and thus by making only $5\epsilon n^2 / 6$ modifications of entries of $M$
	we can make it $\mathcal{F}$-free, contradicting the fact that $M$ is $\epsilon$-far from $\mathcal{F}$-freeness.
	
	Let $Q$ be any matrix of representatives of the large row clusters as above.
	Then $Q$ contains a collection $\mathcal{A}$ of $\epsilon n / 3 k$ pairwise
	disjoint copies of matrices from $\mathcal{F}$. In particular, there exist a certain $s
	\times n$ submatrix $T$ of $Q$ and an $s \times t$ matrix $A(Q) \in \mathcal{F}$ such that
	at least $\epsilon n / 3 k |\mathcal{F}| r^s$ of the copies in $\mathcal{A}$ are
	$A$-copies that lie in $T$. The following elementary removal lemma implies that
	$T$ contains many $A$-copies.
	\begin{obs}
		\label{obs:removal_strip}
		Fix an $s \times t$ matrix $A$. For any $\epsilon > 0$ there exists $\delta > 0$
		such that if an $s \times n$ matrix $T$ contains $\epsilon n$ pairwise-disjoint $A$-copies, then the total number of $A$-copies in
		$T$ is at least $\delta n^t$, with $\delta^{-1}$ polynomial in $\epsilon^{-1}$.
	\end{obs}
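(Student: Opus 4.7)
The plan is to proceed by induction on $t$. For the base case $t = 1$, the $\epsilon n$ pairwise-disjoint $A$-copies in $T$ are simply $\epsilon n$ distinct columns of $T$ each equal to the single column of $A$, giving $\epsilon n$ total $A$-copies; so $\delta = \epsilon$ suffices.

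For the inductive step with $t \geq 2$, let $A'$ denote the $s \times (t-1)$ matrix obtained by deleting the first column of $A$. Write the $\epsilon n$ pairwise-disjoint $A$-copies as column-index tuples $(i_1^{(k)}, \ldots, i_t^{(k)})$, sorted so that $i_1^{(1)} < i_1^{(2)} < \ldots < i_1^{(\epsilon n)}$, and set $P := i_1^{(\lceil \epsilon n / 2 \rceil)}$. This median split yields two structural facts. First, the first $\lceil \epsilon n / 2 \rceil$ copies contribute at least $\epsilon n / 2$ distinct column positions in $\{1, \ldots, P\}$ where the corresponding column of $T$ equals the first column of $A$. Second, for each of the remaining $\geq \epsilon n / 2$ ``late'' copies, every $j \geq 2$ satisfies $i_j^{(k)} > i_1^{(k)} > P$, so the entire copy lies in the sub-strip $T'$ of $T$ on columns $P+1, \ldots, n$, of length $L := n - P$; deleting the first column of each late copy yields at least $\epsilon n / 2$ pairwise-disjoint $A'$-copies in $T'$.

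Next, apply the induction hypothesis to $T'$ and $A'$ with density parameter $\epsilon' := (\epsilon n / 2)/L$, obtaining at least $\delta(\epsilon', t-1) \cdot L^{t-1}$ $A'$-copies in $T'$. The crucial observation, which forces the induction to carry an explicit polynomial bound of the form $\delta(\epsilon, t) \geq c_t \epsilon^t$ with $c_t$ depending only on $t$, is that
\[
\delta(\epsilon', t-1) \cdot L^{t-1} \;\geq\; c_{t-1} (\epsilon')^{t-1} L^{t-1} \;=\; c_{t-1} (\epsilon n / 2)^{t-1},
\]
which is independent of $L$. Since each such $A'$-copy in $T'$ has its first column index strictly greater than $P$, it can be extended to an $A$-copy in $T$ by prepending any of the $\geq \epsilon n / 2$ valid positions in $\{1, \ldots, P\}$. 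Multiplying the two counts yields at least $c_{t-1} (\epsilon n / 2)^t$ $A$-copies in $T$, closing the induction with $c_t := c_{t-1}/2^t$ and $\delta$ polynomial in $\epsilon$ of degree $t$.

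The main subtlety to flag is that the splitting position $P$ is not controlled within a constant factor of $n$: it can lie anywhere from $\Omega(\epsilon n)$ up to $n$, so $L$ ranges over a wide interval. This is precisely why the induction must carry a polynomial bound of matching degree — the factor $(\epsilon')^{t-1}$ supplied by the hypothesis exactly cancels the $L^{t-1}$ from the strip length, whereas a weaker ``existence-only'' induction would fail at this step because $L$ could be arbitrarily small relative to $n$.
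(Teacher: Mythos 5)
Your proof is correct, and it takes a genuinely different (though related) route from the paper's. The paper gives a one-shot argument: it greedily partitions the $\epsilon n$ pairwise-disjoint copies into $t$ groups $\mathcal{A}_1,\ldots,\mathcal{A}_t$ of size $\epsilon n/2t$ each, where $\mathcal{A}_i$ consists of the copies whose $i$-th column index is smallest among the copies not yet used; any $s\times t$ submatrix whose $i$-th column is taken as the $i$-th column of some copy in $\mathcal{A}_i$ is then an $A$-copy, yielding $(\epsilon/2t)^t n^t$ copies directly. Your recursive median split accomplishes the same mix-and-match of columns one coordinate at a time, at the cost of having to strengthen the induction hypothesis to the explicit form $\delta(\epsilon,t-1)\ge c_{t-1}\epsilon^{t-1}$ --- a point you correctly flag as essential, since the length $L$ of the right sub-strip is uncontrolled and only the product $\epsilon' L=\epsilon n/2$ is known, so an ``existence-only'' inductive hypothesis would not close the argument. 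The resulting bounds are comparable: $(2t)^{-t}\epsilon^t$ for the paper versus $2^{-O(t^2)}\epsilon^t$ for yours, both polynomial of degree $t$ in $\epsilon$. The only imprecision is a harmless rounding issue: the number of late copies is $\epsilon n-\lceil\epsilon n/2\rceil\ge \epsilon n/2-1$ rather than $\epsilon n/2$, which is absorbed by taking $n$ large enough, exactly as the paper does in its own proof.
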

	\begin{proof}
		Let $\epsilon > 0$ and let $T$ be a large enough $s \times n$ matrix containing $\epsilon n$ pairwise
		disjoint copies of $A$. We construct $t$ disjoint subcollections
		$\mathcal{A}_1, \ldots, \mathcal{A}_t$ of $\mathcal{A}$, each of size $\epsilon
		n / 2t \leq \lfloor \epsilon n / t \rfloor$, such that for any $i < j$, all
		copies in $\mathcal{A}_i$ are $i$-column-smaller than all copies in
		$\mathcal{A}_j$. This is done by the following process for $i=1, \ldots, t$:
		take $\mathcal{A}_i$ to be the set of the $\epsilon n / 2t$ $i$-smallest copies
		in $\mathcal{A}$ and delete these copies from $\mathcal{A}$.
		Now observe that any $s \times t$ submatrix of $T$ that takes its $i$-th column
		(for $i=1, \ldots, t$) as the $i$-th column of some copy from $\mathcal{A}_i$ is
		equal to $A$. There are $(\epsilon n / 2t)^{t}$ such submatrices among all $\binom{n}{t} \leq n^t$ $s \times t$ submatrices of $T$, and so $T$ contains $
		(\epsilon / 2t)^{t} n^t$ $A$-copies.
	\end{proof}
	
	Observation \ref{obs:removal_strip} implies that for $Q$ and $A(Q)$ as above, $Q$ contains $\gamma n^{s+t}$ $A$-copies where $\gamma^{-1}$ is polynomial in $(\epsilon / 3 k
	|\mathcal{F}| r^s)^{-1}$ and so in $\epsilon^{-1}$. Finally we show that $M$ contains $\delta n^{s+t}$ copies of some $A \in \mathcal{F}$ where $\delta^{-1}$ is polynomial in $\gamma^{-1}$ and so in
	$\epsilon^{-1}$, finishing the proof of the Theorem.
	For any large cluster $R \in \mathcal{R}$ let $R'$ be some subcluster that
	contains exactly $\lfloor \epsilon n / 6r \rfloor > 0$ rows. Let $\mathcal{R'} =
	\{R' : R \in \mathcal{R} \text{ is large}\}$ and
	note that an $\alpha$-fraction of the $k \times k$ submatrices $S$ of $M$ have
	all of their rows in subclusters from $\mathcal{R'}$ with no subcluster
	containing more than one row of $S$, where $\alpha^{-1}$ is polynomial in
	$\epsilon^{-1}$.
	Let $S$ be a random $k \times k$ submatrix of $M$. Conditioning on the event
	that $S$ satisfies the above property, we can assume that $S$ is chosen in the
	following way: First a random $Q$ is created by picking uniformly at random one
	representative from every $R' \in \mathcal{R'}$, and then $S$ is taken as a
	random $k \times k$ submatrix of $Q$. Let $A = A(Q)$ be defined as above. The probability that $S$
	contains a copy of $A$ is at least $\gamma$. That is, a random $k \times k$
	submatrix $S$ of $M$ contains a copy of a matrix from $\mathcal{F}$ with probability at
	least $\alpha \gamma$, so there exists an $s \times t$ matrix $A \in \mathcal{F}$ that is
	contained in a randomly chosen such $S$ with probability at least $\alpha \gamma
	/ |\mathcal{F}|$, so $M$ contains $\alpha \gamma \binom{n}{s} \binom{n}{t} /
	|\mathcal{F}| k^{2k}$ copies of some $A \in \mathcal{F}$: To see this, observe that we can choose a random $s \times t$
	submatrix $S'$ of $M$ by first picking a random $k \times k$ submatrix $S$ and
	then picking an $s \times t$ random submatrix $S'$ of $S$. The event that $S$
	contains a copy of $A$ has probability at least $\alpha \gamma / |\mathcal{F}|$, and
	conditioned on this event, $S'$ is equal to $A$ with probability at least
	$k^{-2k}$, as the number of $s \times t$ submatrices of $S$ is at most $s^k t^k
	\leq k^{2k}$. The proof is concluded by taking a suitable $\delta = \delta(\epsilon) > 0$ that satisfies $\delta n^{s+t} \leq \alpha \gamma \binom{n}{s} \binom{n}{t} /
	|\mathcal{F}| k^{2k}$ for large enough values of $n$. Note that indeed $\delta^{-1}$ is polynomial in $\epsilon^{-1}$.
\end{proof}

%

\section{Multi-dimensional matrices over arbitrary alphabets}
\label{sec:non_binary}
As opposed to the polynomial dependence in the above results on
binary matrices, Fischer and Rozenberg \cite{FischerRozenberg2007} 
showed that in analogous results for
ternary matrices, as well as binary three-dimensional matrices, 
the dependence is super-polynomial in general.
The proof builds on a construction of Behrend \cite{Behrend1946}.
For the ternary case, it gives the following.
\begin{theorem}[\cite{FischerRozenberg2007}]
	\label{thm:FischerRozenberg1}
	There exists a (finite) family $\mathcal{F}$ of $2 \times 2$ 
	binary matrices that is closed under permutations
	and satisfies the following.
	For any small enough $\epsilon > 0$, there exists an
	arbitrarily large $n \times n$ ternary matrix $M$ 
	that contains $\epsilon n^2$ pairwise-disjoint copies of matrices from $\mathcal{F}$,
	yet the total number of submatrices from $\mathcal{F}$ in $M$ is no more than
	$\epsilon^{-c\log{\epsilon}} n^4$ where $c > 0$ is an absolute
	constant.
\end{theorem}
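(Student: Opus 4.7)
The plan is to adapt Behrend's classical construction of dense subsets of $[N]$ free of non-trivial three-term arithmetic progressions. Given a small $\epsilon > 0$, I would take $N$ such that $\log(1/\epsilon) = \Theta(\sqrt{\log N})$, and let $B \subseteq [N]$ be a Behrend set with $|B| \geq \epsilon N$ and no non-trivial $3$-AP. The ternary matrix $M$ would then be built on the grid $[n] \times [n]$ with $n = \Theta(N)$, and can be enlarged to be arbitrarily large for fixed $\epsilon$ by padding or blow-up tricks.

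The family $\mathcal{F}$ would be the orbit under all row and column permutations of a single carefully chosen $2 \times 2$ binary matrix, so that any $\mathcal{F}$-copy in $M$ corresponds to a distinguished three-point ``corner'' configuration. To construct $M$, I would partition the rows and columns into a constant number of consecutive blocks and, in a few designated pairs of blocks, place $0$s and $1$s encoding $B$ (for instance, $M(i,j) = 1$ if $j - i$ lies in a suitable shift of $B$ and $M(i,j) = 0$ otherwise); in all other blocks I would place the symbol $2$. The ternary symbol is crucial: any $2 \times 2$ submatrix containing a $2$-entry automatically lies outside $\mathcal{F}$, which confines $\mathcal{F}$-copies to the highly structured $\{0,1\}$-valued regions of $M$.

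By construction, each $b \in B$ plants $\Theta(N)$ pairwise-disjoint $\mathcal{F}$-copies, producing $\Theta(\epsilon n^2)$ pairwise-disjoint $\mathcal{F}$-copies in total, as required. The heart of the proof is then a purity lemma asserting that every $\mathcal{F}$-copy in $M$ either comes from the planted collection or encodes a non-trivial three-term arithmetic progression with common difference in $B$. Since $B$ is $3$-AP-free, no copy of the second kind can arise except from degenerate progressions with coinciding endpoints, which contribute at most $O(n^3)$ copies in total. Thus the total number of $\mathcal{F}$-copies is $O(\epsilon n^2 + n^3)$. In the chosen regime, $\epsilon^{-c\log \epsilon} = 2^{-c\log^2(1/\epsilon)} = N^{-\Theta(c)}$, so $\epsilon^{-c\log\epsilon} n^4 = n^{4 - \Theta(c)}$, which easily dominates $O(n^3)$ for any small enough absolute constant $c > 0$, matching the claimed bound.

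The main obstacle is establishing the purity lemma. Because $\mathcal{F}$ is closed under all row and column permutations, any matrix in $\mathcal{F}$ can appear in $M$ in many equivalent orientations, and one must design the block structure of $M$ and the placement of the $0/1$-entries so that each orientation of each matrix in $\mathcal{F}$ can only appear as either a planted copy or as one encoding a genuine $3$-AP in $B$. Choosing $\mathcal{F}$ itself to make this tractable, and exploiting the third symbol and the ordering of the blocks to block unintended patterns, is the delicate part and constitutes the technical core of the Fischer--Rozenberg construction being replicated here.
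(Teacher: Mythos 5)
Your outline follows the same route as the paper's proof in Subsection 5.1 (a Behrend-type set, a ternary matrix in which the symbol $2$ confines all $\mathcal{F}$-copies to a few $\{0,1\}$-valued regions, and a blow-up to make $n$ arbitrarily large for fixed $\epsilon$), and your $\mathcal{F}$, the permutation orbit of one $2\times 2$ pattern, is exactly the paper's $\mathcal{F}=\{\left(\begin{smallmatrix}1&0\\0&1\end{smallmatrix}\right),\left(\begin{smallmatrix}0&1\\1&0\end{smallmatrix}\right)\}$. But the step you defer --- the ``purity lemma'' --- is not a technicality to be filled in later; it is the entire content of the theorem, and the placement you actually write down does not make it true. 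If $M(i,j)=1$ when $j-i$ lies in a shift of $B$ and $M(i,j)=0$ otherwise on a single $\{0,1\}$-valued region, then a copy of $\left(\begin{smallmatrix}1&0\\0&1\end{smallmatrix}\right)$ needs only two \emph{independent} memberships in $B$ (one per diagonal entry) plus two non-memberships, which already gives $\Theta(|B|^2n^2)=\Theta(\epsilon^2 n^4)$ copies --- polynomially many, defeating the purpose. Nothing about a stripe pattern forces the four entries of a copy to encode a \emph{single} arithmetic relation; that has to be engineered.

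The paper engineers it by splitting $M$ into four quadrants and planting, for each $1\le i\le m/5$ and $x\in X$, the two $1$-entries in the two diagonal quadrants and the two $0$-entries in the two off-diagonal quadrants, at positions whose row and column offsets are the linear forms $x$, $2x$, $3x$ of the same parameter $x$. Two features then do all the work: (i) each quadrant contains only one of the symbols $0,1$, so any copy of $\left(\begin{smallmatrix}1&0\\0&1\end{smallmatrix}\right)$ must take exactly one entry from each quadrant (and $\left(\begin{smallmatrix}0&1\\1&0\end{smallmatrix}\right)$ never occurs at all); and (ii) matching the two rows and two columns of such a copy forces its four parameters $x_1,x_2,x_3,x_4\in X$ to satisfy $x_1+x_3+x_4=3x_2$, so $X$ is taken to be Behrend-type with no non-trivial solution to this specific equation (slightly stronger than $3$-AP-freeness, but Behrend's construction supplies it). Note also that, done this way, the trivial solutions $x_1=x_2=x_3=x_4$ are precisely the planted copies: there is no separate $O(n^3)$ ``degenerate'' error term, and the final comparison is between $n^4|X|/5m^3\le n^4/m^2$ total copies and $n^2|X|/5m$ disjoint ones, which is what ties $m$ to $\epsilon$ and fixes the constant $c$. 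Until you specify a placement with properties (i) and (ii), the argument is incomplete.
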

Theorem \ref{thm:FischerRozenberg1} implies that an analogue of
Theorem \ref{thm:disjoint_copies} with polynomial dependence cannot be obtained 
when the alphabet is bigger than binary, even when $\mathcal{F}$ is a small finite
family that is closed under permutations. In Subsection
\ref{subsec:lower_bound} we describe another construction that establishes
Theorem \ref{thm:FischerRozenberg1}, which is slightly simpler
than the original construction in \cite{FischerRozenberg2007}.

In what follows, we focus on the problem of finding a ``weak'' removal lemma
analogous to Theorem \ref{thm:disjoint_copies} for matrices in more than two dimensions over an arbitrary alphabet. Here we do not try to optimize the dependence between the parameters, but rather to 
show that such a removal lemma exists. Note that in two dimensions this removal lemma follows from Theorem \ref{thm:ABF_matrix_removal_lemma}, but our results here suggest a direction to prove a weak high dimensional removal lemma without trying to generalize the heavy machinery used in \cite{AlonBenEliezerFischer2017} to the high dimensional setting.
Our main result here states that this problem is equivalent in some sense to the problem of 
showing that if a hypermatrix $M$ contains many pairwise-disjoint copies of a hypermatrix $A$, then it contains a ``wide'' copy of $A$; more details are given later.
In what follows, we use the term $d$-matrix to refer to a matrix in $d$ dimensions.
An \emph{$(n,d)$-matrix} is a $d$-matrix whose dimensions are $n \times \cdots \times n$.

A weak removal lemma for families of $d$-matrices that are closed under permutations follows easily
from the hypergraph removal lemma \cite{Gowers2001, RodlSkokan2004, NagleRodlSchacht2006, Tao2006} using a suitable construction. 
\begin{prop}
	\label{obs:obs_non_binary}
	Let $\Gamma$ be an arbitrary alphabet and let $\mathcal{F}$ be a finite family of $d$-matrices
	over $\Gamma$ that is closed under permutations (in all $d$ coordinates). 
	For any $\epsilon > 0$ there exists $\delta > 0$ such that the following holds.
	If an $(n,d)$-matrix $M$ over $\Gamma$ contains $\epsilon n^d$ pairwise disjoint copies of $d$-matrices from $\mathcal{F}$, then $M$ contains $\delta n^{s_1 + \cdots + s_d}$ copies of some $s_1 \times \ldots \times s_d$ matrix $A \in \mathcal{F}$.
\end{prop}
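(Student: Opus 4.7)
My plan is to reduce the proposition to the hypergraph removal lemma via a colored $d$-partite $d$-uniform hypergraph encoding of $M$. Construct the hypergraph $H$ on vertex set $V_1 \sqcup \cdots \sqcup V_d$ with $|V_j| = n$, placing a hyperedge on each transversal $(v_{1,i_1}, \ldots, v_{d,i_d})$ and coloring it by $M(i_1, \ldots, i_d) \in \Gamma$. For each $A \in \mathcal{F}$ of dimensions $s_1 \times \cdots \times s_d$, let $K_A$ denote the colored complete $d$-partite $d$-uniform hypergraph with parts of sizes $s_1, \ldots, s_d$ whose hyperedges are colored according to the entries of $A$. Because $\mathcal{F}$ is closed under permutations in every coordinate, any partition-preserving, color-preserving embedding of $K_A$ into $H$ produces—after sorting the images in each part into increasing order—an ordered submatrix copy of some matrix in the permutation orbit of $A$, and all such orbit elements lie in $\mathcal{F}$.

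Starting from the $\epsilon n^d$ pairwise-disjoint $\mathcal{F}$-copies guaranteed by hypothesis, a first application of pigeonhole isolates a single matrix $A \in \mathcal{F}$ with at least $(\epsilon / |\mathcal{F}|) n^d$ pairwise-disjoint copies of $A$ in $M$. Each such copy determines a partition- and color-preserving embedding of $K_A$ into $H$, and these embeddings are pairwise edge-disjoint since disjointness of the submatrix copies in $M$ translates directly to disjointness of the underlying $d$-tuples in $H$. Consequently, destroying all these embeddings (by re-coloring hyperedges of $H$) requires at least $(\epsilon/|\mathcal{F}|) n^d$ modifications.

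Now invoke the colored analogue of the partite hypergraph removal lemma, which follows in a standard way from the hypergraph removal lemma of \cite{Gowers2001, RodlSkokan2004, NagleRodlSchacht2006, Tao2006} by treating each color class as a separate uncolored $d$-uniform $d$-partite hypergraph and asking for simultaneous copies. Its contrapositive gives at least $\delta' n^{s_1 + \cdots + s_d}$ partition- and color-preserving embeddings of $K_A$ into $H$, for some $\delta' = \delta'(\epsilon, \mathcal{F}) > 0$. Translating back, each such embedding, after sorting within each part, yields an ordered submatrix copy of some element in the permutation orbit of $A$; a final pigeonhole over the (at most $s_1! \cdots s_d!$) orbit elements produces at least $\delta n^{s_1 + \cdots + s_d}$ copies of a single specific $A' \in \mathcal{F}$, as required. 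The only real point of care is setting up the colored partite removal lemma correctly, but since no polynomial dependence is demanded here, no quantitative refinement of the standard hypergraph machinery is needed.
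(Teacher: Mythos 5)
Your proof is correct, but it takes a genuinely different route from the paper's. The paper (which writes out the case $d=2$ and notes the general case is analogous) builds an \emph{uncolored} $(s+t)$-partite graph with one part of size $n$ for each row-index and each column-index of the pattern $A$: entries of $M$ are encoded by the \emph{presence or absence} of edges between a row-part and a column-part (an edge iff $M(i,j)=A(a,b)$), distinctness of the chosen rows/columns is enforced by intra-side edges, and the forbidden configuration becomes the clique $K_{s+t}$. This gadget reduces everything to the plain, non-induced clique removal lemma, and the closure of $\mathcal{F}$ under permutations is absorbed exactly as in your argument, via the bijection between cliques and pairs (submatrix, reordering). You instead build a $d$-partite $d$-uniform \emph{colored} hypergraph with only $d$ parts of size $n$ and take the pattern to be the fully colored complete $d$-partite hypergraph $K_A$; this encoding is more direct and scales transparently to $d$ dimensions and arbitrary alphabets, but the black box you need is the \emph{colored} (induced-type) hypergraph removal lemma rather than the ordinary one. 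That lemma is indeed a standard consequence of the hypergraph regularity method, so your proof goes through; the one caveat is that your justification of it --- ``treating each color class as a separate uncolored hypergraph and asking for simultaneous copies'' --- is a description of what the colored removal lemma asserts, not a reduction to the uncolored version: one must regularize all color classes simultaneously and run the counting lemma for the conjunction of constraints. Since no quantitative dependence is claimed here, citing the colored removal lemma as known is acceptable, but you should state it as a named result rather than suggest it follows formally from the uncolored one. The remaining bookkeeping (the initial pigeonhole over $\mathcal{F}$, edge-disjointness of the images, and the final pigeonhole over the at most $s_1!\cdots s_d!$ orbit elements, which costs only constant factors) is fine and mirrors the paper's.
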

Note that Theorem \ref{thm:FischerRozenberg1} implies that the dependence of
$\delta^{-1}$ on $\epsilon^{-1}$ in Proposition \ref{obs:obs_non_binary} cannot be
polynomial.
The question whether the statement of Proposition \ref{obs:obs_non_binary} holds for
any finite family $F$ is open for $d$-matrices with $d > 2$. Here we state the question in the following
equivalent but simpler form.
\begin{open}
	\label{open:open_non_binary}
	Let $d > 2$ be an integer.
	Is it true that for any alphabet $\Gamma$, $s_1 \times \ldots \times s_d$ matrix $A$ over $\Gamma$ and	$\epsilon > 0$ there exists $\delta > 0$, 
	such that for any $(n,d)$-matrix $M$ over
	$\Gamma$ containing $\epsilon n^d$ pairwise-disjoint copies of $A$, the total number of $A$-copies in $M$ is at least $\delta n^{s_1 + \cdots + s_d}$?
\end{open}
Note that Theorem \ref{thm:disjoint_copies} settles the two-dimensional binary case of Problem \ref{open:open_non_binary} with
$\delta^{-1}$ polynomial in $\epsilon^{-1}$, and Theorem \ref{thm:ABF_matrix_removal_lemma} settles the two-dimensional case over any alphabet. On the other hand, $\delta^{-1}$ cannot be polynomial in $\epsilon^{-1}$ if $|\Gamma| > 2$ or $d > 2$.

Our main theorem in this domain shows that Problem \ref{open:open_non_binary} is
equivalent to another statement that looks more accessible.
We need the following definition to describe it.
Let $M:[n_1] \times \ldots \times [n_d] \to \Gamma$ and 
let $S$ be the submatrix of $M$ on the indices $\{r_1^1, \ldots, r_{s_1}^1\} \times \ldots \times \{r_1^d, \ldots, r_{s_d}^{d}\}$ where $r_j^i < r_{j+1}^{i}$ for any $1 \leq i \leq d$, $1 \leq j \leq s_i-1$.
The \emph{(i, j)-width} of $S$ (for $1 \leq i \leq d$ and $1 \leq j \leq s_i - 1$) is
$(r_{j+1}^{i} - r_{j}^{i}) / n_i$.

\begin{theorem}
	\label{thm:two_equivalent}
	The following statements are equivalent for any $d \geq 2$.
	\begin{enumerate}
		\item \label{s1} 
		For any alphabet $\Gamma$, $s_1 \times \ldots \times s_d$ matrix $A$ over $\Gamma$ and $\epsilon > 0$ there
		exists $\delta > 0$ such that for any $(n,d)$-matrix $M$ that contains $\epsilon n^d$ pairwise disjoint copies of $A$, the total number of
		$A$-copies in $M$ is at least $\delta n^{s_1 + \cdots + s_d}$.
		
		\item \label{s2} 
		For any alphabet $\Gamma$, $s_1 \times \ldots \times s_d$ matrix $A$ over $\Gamma$ and $\epsilon > 0$ there
		exists $\delta > 0$ such that for any $(n,d)$-matrix $M$
		that contains $\epsilon n^d$ pairwise disjoint copies of $A$,  and
		any $1 \leq i \leq d$, $1 \leq j \leq s_i$, 
		there exists an $A$-copy in $M$ whose $(i,j)$-width is at least $\delta$.
	\end{enumerate}
\end{theorem}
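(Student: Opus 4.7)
For the direction (\ref{s1}) $\Rightarrow$ (\ref{s2}), the plan is a direct averaging argument. Assuming (\ref{s1}) provides $\delta_1 n^{s_1 + \cdots + s_d}$ copies of $A$ whenever $M$ contains $\epsilon n^d$ pairwise disjoint $A$-copies, observe that any $A$-copy with $(i,j)$-width less than $\delta$ has its index $r_{j+1}^{i}$ confined to a window of size $\delta n$ once $r_j^i$ and the remaining dimension-$i$ indices are fixed. Consequently the total number of such copies is at most $\delta \cdot n^{s_1 + \cdots + s_d}$. Taking $\delta = \delta_1/2$ forces the existence of at least one $A$-copy in $M$ with $(i,j)$-width at least $\delta$, establishing (\ref{s2}).

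For the direction (\ref{s2}) $\Rightarrow$ (\ref{s1}), the plan is to mimic the strategy behind Theorem~\ref{thm:disjoint_copies}: use (\ref{s2}) iteratively to produce a ``block structure'' on $M$ with a large subcollection of pairwise disjoint $A$-copies compatible with it, and then convert this structured subcollection into many total $A$-copies. Concretely, the plan is to construct separators $X_i = \{x_{i,1} < \cdots < x_{i, s_i - 1}\} \subseteq [n]$ in each dimension $i$ along with a subcollection $\mathcal{C}$ of $\tau n^d$ pairwise disjoint $A$-copies separated by $X_1 \times \cdots \times X_d$ (meaning the $k$-th dimension-$i$ index of every copy in $\mathcal{C}$ lies in $(x_{i,k-1}, x_{i,k}]$, with $x_{i,0} = 0$, $x_{i,s_i} = n$). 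These separators are introduced one at a time, inductively over $(i,j)$, maintaining at every step that $\mathcal{C}$ remains a collection of polynomial size compatible with the separators chosen so far.

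At the inductive step introducing $x_{i,j}$, the plan is to build an auxiliary $(n,d)$-matrix $M'$ that encodes both the existing separators and the current collection $\mathcal{C}$, and apply (\ref{s2}) to $M'$ to obtain a single $A$-copy of large $(i,j)$-width; its wide gap determines $x_{i,j}$. A shifting argument---replacing a dimension-$i$ slice of a copy in $\mathcal{C}$ by a nearby slice on the other side of $x_{i,j}$, in the spirit of the row-swap following the choice of $x_i$ in the proof of Lemma~\ref{lem:separated}---extracts a subcollection of size a polynomial fraction of $|\mathcal{C}|$ that is additionally compatible with $x_{i,j}$. Once all separators are in place, the plan is to convert the final collection $\mathcal{C}$ into $\delta n^{s_1 + \cdots + s_d}$ total $A$-copies by a mixing/averaging argument across the blocks $B_{j_1, \ldots, j_d} = \prod_i (x_{i,j_i-1}, x_{i,j_i}]$ of the resulting partition, in the spirit of the final paragraphs of the proof of Theorem~\ref{thm:disjoint_copies}.

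The hard part is the inductive separator-building step: (\ref{s2}) supplies only a single wide copy per application, so transforming this into structural control over a polynomial fraction of $\mathcal{C}$ requires an artful construction of the auxiliary $M'$ together with a robust shifting argument. The final mixing step also requires adaptation, since in the binary two-dimensional setting of Theorem~\ref{thm:disjoint_copies} it relies on the homogeneity of blocks supplied by Lemma~\ref{conditional_partition}, which is unavailable over arbitrary alphabets or in higher dimensions; bypassing it will likely use only that each block contains many ``correct'' entries contributed by the copies of $\mathcal{C}$ themselves. The delicacy of these steps is precisely why (\ref{s1}) and (\ref{s2}) are equivalent yet jointly open, and why Theorem~\ref{thm:two_equivalent} is formulated as a reduction of Problem~\ref{open:open_non_binary} to the more accessible-looking statement (\ref{s2}).
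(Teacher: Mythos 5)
Your first direction is exactly the paper's argument and is fine. The second direction, however, has two genuine gaps, one of which is fatal to the plan as described. The decisive problem is the final step: a collection of $\tau n^d$ pairwise disjoint $A$-copies separated by $X_1\times\cdots\times X_d$ cannot be converted into $\delta n^{s_1+\cdots+s_d}$ total copies by any ``mixing/averaging argument'' that uses only the entries contributed by the copies themselves. Having many correct entries in each block does not make many submatrices correct in \emph{all} $\prod s_i$ positions simultaneously; this is precisely the Ruzsa--Szemer\'edi phenomenon. Indeed, in the construction proving Theorem \ref{thm:FischerRozenberg1} all the disjoint $A$-copies are already separated by $\{n/2\}\times\{n/2\}$, yet the total number of copies is super-polynomially small, so any elementary counting of the kind you describe would contradict that lower bound. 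The paper's proof instead builds an $(s+t)$-partite auxiliary graph whose parts are the intervals between consecutive separators, whose cross edges record agreement with the corresponding entry of $A$, and whose $K_{s+t}$'s are exactly the separated $A$-copies; the disjoint copies give edge-disjoint cliques, and the graph removal lemma (hypergraph removal lemma for $d>2$) supplies the $\delta$-fraction of cliques. This appeal to the removal lemma is not an optimization --- it is the essential ingredient, and it is also why the equivalence carries no polynomial bounds.

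The second gap is in the separator-building step. Statement \ref{s2} yields only a single wide copy per application, and your plan to amplify it via a shifting argument ``in the spirit of Lemma \ref{lem:separated}'' does not transfer: that row-swap works only because the binary conditional clustering lemma (Lemma \ref{conditional_clustering}) provides two rows differing in few entries, and no such tool exists over arbitrary alphabets or in higher dimensions. The paper's mechanism is different and avoids shifting entirely: working over the enlarged alphabet $\Gamma\cup\{\alpha\}$ with all non-copy entries blanked to $\alpha$, it \emph{iterates} Statement \ref{s2} --- repeatedly find a copy of $(i,j)$-width at least $\delta_i$, add it to a new collection, and erase every old copy meeting it. The process can halt only after half the old copies are consumed, so it produces $\Omega(\epsilon_{i-1}n^d)$ pairwise disjoint \emph{wide} copies; a uniformly random threshold then correctly separates a $\delta_i$-fraction of them in expectation, and one fixes a threshold achieving the expectation. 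Your write-up correctly identifies both of these steps as ``the hard part,'' but the specific devices you propose for them would not go through, and the actual resolutions (greedy extraction of many wide copies plus a random threshold; the hypergraph removal lemma at the end) are missing from the proposal.
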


The proofs of the statements here are given, for simplicity, only for two dimensional matrices, but they translate directly to higher dimensions. The only major difference in the high dimensional case is the use of the hypergraph removal lemma instead of the graph removal lemma.

We start with the (simple) proof of Proposition \ref{obs:obs_non_binary}. The proof
uses the non induced graph removal lemma. Some definitions are required for
the proof. An $s \times t$ \emph{reordering} $\sigma$ is a permutation of $[s]
\times [t]$ that is a Cartesian product of two permutations $\sigma_1:[s] \to
[s]$ and $\sigma_2:[t] \to [t]$. Given an $s \times t$ matrix $A$, the $s \times
t$ matrix $\sigma(A)$ is the result of the following procedure: First reorder
the rows of $A$ according to the permutation $\sigma_1$ and then reorder the
columns of the resulting matrix according to the permutation $\sigma_2$.

\begin{proof}[Proof of Proposition \ref{obs:obs_non_binary}]
	Let $k(\mathcal{F})$ denote the largest row or column dimension of matrices from $\mathcal{F}$. Let
	$\epsilon > 0$ and let $M$ be an $n \times n$ matrix over $\Gamma$ that contains  $\epsilon n^2$ pairwise-disjoint copies of matrices from $\mathcal{F}$. In particular, there is an $s
	\times t$ matrix $A \in \mathcal{F}$ such that $M$ contains $\epsilon n^2 / |\mathcal{F}|$ pairwise-disjoint copies of $A$. 
	
	We construct an $(s+t)$-partite graph $G$ on $(s+t)n$ vertices as follows: There
	are $s$ row parts $R_1, \ldots, R_s$ and $t$ column parts $T_1, \ldots, T_t$,
	each containing $n$ vertices. The vertices of $R_i$ ($C_i$) are labeled $r_1^i,
	\ldots, r_n^i$ ($c_1^i, \ldots, c_n^i$ respectively).
	Two vertices $r_i^a$ and $r_j^b$ (or $c_i^a$ and $c_j^b$) with $a \neq b$ are
	connected by an edge iff $i \neq j$.
	$r_i^a$ and $c_j^b$ are connected iff $M(i,j) = A(a,b)$.
	
	We now show that there exists a bijection between copies of $K_{s+t}$ in $G$ and
	couples $(S, \sigma)$ where $S$ is an $s \times t$ submatrix of $M$ and $\sigma$
	is an $s \times t$ reordering such that $\sigma(S) = A$.
	Indeed, take the following mapping: A couple $(S,\sigma)$, where $S$ is the
	submatrix of $M$ on $\{a_1, \ldots, a_s\} \times \{b_1, \ldots, b_t\}$ with $a_1
	< \ldots < a_s$ and $b_1 < \ldots < b_t$ and $\sigma = \sigma_1 \times
	\sigma_2$, is mapped to the induced subgraph of $G$ on $\{r_{a_1}^{\sigma_1(1)},
	\ldots, r_{a_s}^{\sigma_1(s)}, c_{b_1}^{\sigma_2(1)}, \ldots,
	c_{b_t}^{\sigma_2(t)}\}$.
	
	It is not hard to see that $(S,\sigma)$ is mapped to a copy of $K_{s+t}$ if and
	only if $\sigma(S)$ is equal to $A$. On the other hand, every copy of $K_{s+t}$
	in $G$ has exactly one vertex in each row part and in each column part, and
	there exists a unique couple $(S,\sigma)$ mapped to it.
	
	There exist $\epsilon n^2/ |\mathcal{F}|$ pairwise-disjoint
	$A$-copies in $M$ that are mapped (with the identity reordering) to
	edge-disjoint copies of $K_{s+t}$ in $G$. By the graph removal lemma, there
	exists $\delta > 0$ such that at least a $\delta$-fraction of the subgraphs of
	$G$ on $s+t$ vertices are cliques. Therefore, at least a $\delta$-fraction of
	the possible couples $(S,\sigma)$ (where $S$ is an $s \times t$ submatrix of $M$
	and $\sigma$ is an $s \times t$ reordering) satisfy $\sigma(S) = A$, concluding the proof.
\end{proof}
Next we give the proof of Theorem \ref{thm:two_equivalent}. We may and will assume
throughout the proof that $M$ is an $n \times n$ matrix where $n$ is large
enough with respect to $\epsilon$.
The terms $i$-height and $j$-width correspond to $(1,i)$-width and $(2,j)$-width, respectively, in the definition given before the statement of Theorem \ref{thm:two_equivalent}.
\begin{proof}[Proof of Theorem \ref{thm:two_equivalent}]
	We start with deriving Statement \ref{s2} from Statement \ref{s1}; this
	direction is quite straightforward, while the other direction is more
	interesting.
	Fix an $s \times t$ matrix $A$. Let $\epsilon > 0$ and assume that Statement
	\ref{s1} holds. There exists $\delta = \delta(\epsilon)$, such that if $M$ contains 
	$\epsilon n^2$ pairwise-disjoint $A$-copies then 
	it contains $\delta n^{s+t}$ copies of $A$.
	To prove
	Statement \ref{s2} we can pick $\delta' = \delta'(\epsilon) > 0$ small enough
	such that for any large enough $n \times n$ matrix $M$, any $1 \leq i \leq s-1$
	and any $1 \leq j \leq t-1$, the fraction of $s \times t$ submatrices with
	$i$-height (or $j$-width) smaller than $\delta'$ among all $s \times t$
	submatrices is at most $\delta / 2$. 
	Fix an $1 \leq i \leq s-1$. This choice of $\delta'$ implies that any matrix $M$
	containing $\epsilon n^2$ pairwise disjoint $A$-copies also contains an $A$-copy with
	$i$-height at least $\delta'$. Similarly, for any $1 \leq j \leq t-1$ there is
	an $A$-copy with $j$-width at least $\delta'$.

	Next we assume that Statement \ref{s2} holds and prove Statement \ref{s1}. 
	Fix an $s \times t$ matrix $A$ over an alphabet $\Gamma$, let $\epsilon > 0$ and
	let $M$ be a large enough $n \times n$ matrix containing a collection $\mathcal{A}_0$ of $\epsilon n^2$ pairwise disjoint $A$-copies. We
	will show that there exist $\epsilon^* > 0$ that depends only on $\epsilon$,
	sets $X,Y$ of row and column separators respectively of sizes $s-1$ and $t-1$
	and a collection of $\epsilon^* n^2$ disjoint $A$-copies
	separated by $X \times Y$ in $M$. Then we will combine a simpler variant of the
	construction used in the proof of Proposition \ref{obs:obs_non_binary} with the
	graph removal lemma to show that $M$ contains $\delta n^{s+t}$ copies of $A$ for a suitable
	$\delta(\epsilon) > 0$.

	The number of $A$-copies in $M$ does not depend on the alphabet, so we may consider $A$
	and $M$ as matrices over the alphabet $\Gamma' = \Gamma \cup \{\alpha\}$ for
	some $\alpha \notin \Gamma$, even though all symbols in $A$ and $M$ are from
	$\Gamma$.
	Without loss of generality we assume that no two entries in $A$ are equal.
	
	Let $X_0 = \phi$, $\epsilon_0 = \epsilon$ and let $M_0$ be the following $n \times n$ matrix over $\Gamma'$: All $A$-copies in
	$\mathcal{A}_0$ appear in the same locations in $M_0$, and all other entries of
	$M_0$ are equal to $\alpha$. Clearly, any $A$-copy in $M_0$ also appears in $M$.
	Next, we construct iteratively for any $i=1, \ldots, s-1$ an $n \times n$ matrix
	$M_i$ over $\Gamma'$ that contains a collection $\mathcal{A}_i$ of $\epsilon_i
	n^2$ pairwise disjoint copies of $A$ where $\epsilon_i > 0$ depends only on
	$\epsilon_{i-1}$, such that all $A$-copies in $M_i$ also exist in $M_{i-1}$. 
	We also maintain a set $X_i$ of row
	separators whose elements are $x_1 < \ldots < x_i$, such that any entry of
	$M_i$ between $x_{j-1}$ and $x_{j}$ for $j=1, \ldots, i$ (where we define $x_0 =
	0, x_s = n$) is either equal to one of the entries of the $j$-th row of $A$ or
	to $\alpha$.
	
	The construction of $M_i$ given $M_{i-1}$ is done as follows. By Statement
	\ref{s2}, there exists $\delta_i = \delta_i(\epsilon_{i-1})$ such that any
	matrix $M'$ over $\Gamma'$ containing at least $\epsilon_{i-1} n^2 / 2$ copies 
	of $A$ also contains a
	copy of $A$ with $i$-height at least $\delta_i$.
	We start with a matrix $M'$ equal to $M_{i-1}$ and an empty $\mathcal{A}_i$, and
	as long as $M'$ contains a copy of $A$ with $i$-height at least $\delta_i$, we
	add it to $\mathcal{A}_i$ and modify (in $M'$) all entries of all $A$-copies
	from $\mathcal{A}_{i-1}$ that intersect it to $\alpha$.
	By the separation that $X_{i-1}$ induces on $M'$, each such copy has its $j$-th
	row between $x_{j-1}$ and $x_j$ for any $1 \leq j \leq i-1$.
	
	This process might stop only when at least $\epsilon_{i-1} n^2/2$ of the copies from $\mathcal{A}_{i-1}$ in $M'$ have one of their entries modified.
	Since in each step
	at most $st$ copies of $A$ are deleted from $M'$, in the end $\mathcal{A}_i$
	contains at least $\epsilon_{i-1} n^2 / 2 s t$ pairwise disjoint copies of $A$
	with $i$-height at least $\delta_i$.
	Pick uniformly at random a row index $x_i > x_{i-1}$. The probability that a
	certain copy of $A$ in $\mathcal{A}_i$ has its $i$-th row at or above $x_i$ and
	its $(i+1)$-th row below $x_i$ is at least $\delta_{i}$. Therefore, the expected
	number of $A$-copies in $\mathcal{A}_i$ with this property is at least
	$\epsilon_i n^2$ with $\epsilon_i = \delta_i \epsilon_{i-1} / 2 s t$, so there
	exists some $x_i$ such that at least $\epsilon_i n^2$ $A$-copies in
	$\mathcal{A}_i$ have their first $i+1$ rows separated by $X_i = X_{i-1} \cup
	\{x_i\}$; delete all other copies from $\mathcal{A}_i$.
	We construct $M_i$ as follows: All $A$-copies from $\mathcal{A}_i$ appear in
	the same locations in $M_i$, and all other entries of $M_i$ are equal to
	$\alpha$.
	
	After iteration $s-1$ we have a matrix $M_{s-1}$ with $\epsilon_{s-1}n^2$ copies
	of $A$ separated by $X = X_{s-1}$. We apply the same process in columns instead
	of rows, starting with the matrix $M_{s-1}$. The resulting matrix $M^*$ contains
	$\epsilon^* n^2$ pairwise disjoint copies of $A$ separated by $X \times Y$ where
	$Y$ consists of the column separators $y_1 < \ldots < y_{t-1}$, $\epsilon^*$
	depends on $\epsilon$, and $M^*$ only contains $A$-copies that appeared in the original $M$.
	
	Finally, construct an $(s+t)$-partite graph $G$ on $2n$ vertices as follows: The
	row parts are $R_1, \ldots, R_s$ and the column parts are $C_1, \ldots, C_t$
	where $R_i$ ($C_i$) contains vertices labeled $x_{i-1}+1, \ldots, x_i$
	($y_{i-1}+1, \ldots, y_i$ respectively) with $x_0 = y_0 = 0, x_s=y_t =n$. Any
	two row (column) vertices not in the same part are connected.
	Vertices $a \in R_i, b \in C_j$ are connected if and only if $M^*(a,b) =
	A(i,j)$.
	Clearly there exists a bijection between $A$-copies in $M^*$ and $K_{s+t}$
	copies in $G$ that maps disjoint $A$-copies to edge disjoint $K_{s+t}$-copies in
	$G$, so it contains $\epsilon^* n^2$ edge disjoint $(s+t)$-cliques. By the graph
	removal lemma there exists $\delta = \delta(\epsilon^*) > 0$ such that a
	$\delta$-fraction of the subgraphs of $G$ on $s+t$ vertices are cliques. Hence
	at least a $\delta$-fraction of the $s \times t$ submatrices of $M$ are equal to $A$.
\end{proof}

\subsection{Lower bound}
\label{subsec:lower_bound}
In this subsection we give an alternative constructive proof of Theorem
\ref{thm:FischerRozenberg1}.
Our main tool is the following result in additive number theory from
\cite{Alon2001}, based on a construction of Behrend \cite{Behrend1946}.
\begin{lemma}[\cite{Alon2001,Behrend1946}]
	For every positive integer $m$ there exists a subset $X \subseteq [m] =
	\{1,\ldots,m\}$ with no non-trivial solution to the equation $x_1 + x_2 + x_3 =
	3x_4$, where $X$ is of size at least
	\begin{equation}
	|X| \geq \frac{m}{e^{20 \sqrt{\log{m}}}}.
	\end{equation}
\end{lemma}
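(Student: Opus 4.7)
My plan is to use a Behrend-style construction, adapted from its classical $3$-AP setting to the equation $x_1+x_2+x_3=3x_4$. The driving observation is that this equation forces $x_4$ to be the centroid of $x_1,x_2,x_3$, so if we restrict to a set of integers whose associated digit-vectors (in a suitable base) all lie on a single Euclidean sphere, then strict convexity of $\|\cdot\|^2$ will rule out every non-trivial solution.

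Concretely, I would fix integers $d,k\geq 1$ and a base $b\geq 3d-2$, and consider the injection $\phi:\{0,\ldots,d-1\}^k\hookrightarrow[b^k]$ defined by $\phi(a_1,\ldots,a_k)=\sum_{i=1}^k a_i b^{i-1}$. Because $b>3(d-1)$, any integer identity of the form $\phi(v_1)+\phi(v_2)+\phi(v_3)=3\phi(v_4)$ with $v_i\in\{0,\ldots,d-1\}^k$ is \emph{carry-free}: both sides have all base-$b$ coefficients at most $3(d-1)<b$, so uniqueness of the base-$b$ representation forces the vector identity $v_1+v_2+v_3=3v_4$ in $\mathbb{Z}^k$. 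Partitioning the $d^k$ digit-vectors by their squared norm $\|v\|^2\in\{0,1,\ldots,k(d-1)^2\}$ and applying pigeonhole, some sphere $\{v:\|v\|^2=r\}$ contains at least $d^k/\bigl(k(d-1)^2+1\bigr)$ vectors. I take $X$ to be the image of this sphere under $\phi$.

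The no-non-trivial-solution claim is then immediate from convexity: if $\phi(v_1),\phi(v_2),\phi(v_3),\phi(v_4)\in X$ satisfy $\phi(v_1)+\phi(v_2)+\phi(v_3)=3\phi(v_4)$, then $v_4=(v_1+v_2+v_3)/3$ and Jensen's inequality gives $\|v_4\|^2\leq\bigl(\|v_1\|^2+\|v_2\|^2+\|v_3\|^2\bigr)/3=r$, with equality iff $v_1=v_2=v_3$; since $\|v_4\|^2=r$ we conclude $v_1=v_2=v_3=v_4$, a trivial solution. The last step is parameter optimization: setting $b=3d$, choosing $k\approx c\sqrt{\log m}$ and $d$ so that $b^k\leq m$, and balancing the two exponential factors $3^k$ and $d^2$ in the denominator of the pigeonhole bound yields $|X|\geq m/e^{O(\sqrt{\log m})}$, with the implicit constant comfortably below $20$. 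The only delicate ingredient is the carry-free base-$b$ trick, which is standard in Behrend's argument; I do not anticipate any real obstacle beyond routine bookkeeping.
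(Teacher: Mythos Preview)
The paper does not actually prove this lemma; it merely cites it from \cite{Alon2001,Behrend1946} and uses it as a black box in the proof of Theorem~\ref{thm:FischerRozenberg1}. Your proposal is a correct and self-contained proof: it is precisely the standard Behrend construction, adapted in the natural way from the $3$-AP equation $x_1+x_2=2x_3$ to the centroid equation $x_1+x_2+x_3=3x_4$. The three ingredients you identify---the carry-free base-$b$ encoding (with $b>3(d-1)$ so that both $\sum (v_j)_i$ and $3(v_4)_i$ are valid digits), the pigeonhole over spheres $\|v\|^2=r$, and the strict convexity of $\|\cdot\|^2$ forcing $v_1=v_2=v_3$ whenever their centroid lies on the same sphere---are exactly what appears in the cited references, and your parameter balancing $k\approx\sqrt{\log m}$ gives a constant well under $20$. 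There is nothing to correct.
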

\begin{proof}[Proof of Theorem \ref{thm:FischerRozenberg1}]
	Consider the family $\mathcal{F} = \{A,B\}$ where \[A = \left( 
	\begin{array}{cc}
	1 & 0 \\
	0 & 1
	\end{array}
	\right), \ \ B = \left( 
	\begin{array}{cc}
	0 & 1 \\
	1 & 0
	\end{array}
	\right)
	,\]
	and observe that $\mathcal{F}$ is closed under permutations. 
	Let $m$ be a positive integer divisible by $10$ and let $X \subseteq
	[m/10]$ be a subset with no non-trivial solution to the equation $x_1 + x_2 +
	x_3 = 3x_4$ that is of maximal size. We construct the following $m \times m$
	ternary matrix $M$. For any $1 \leq i \leq m/5$ and any $x \in X$ we put a copy
	of $A$ in $M$ as follows:
	\begin{align*}
	M(i,i+x) = M(m/2 + i + 2x, m/2 + i + 3x) &= 1 \\
	M(i,m/2+i+3x) = M(m/2 + i + 2x,i+x) &= 0.
	\end{align*} 
	We set all other entries of $M$ to $2$. Let $\mathcal{A}$ be the collection of
	$q = m|X|/5 \geq m^2 / {50e^{20\sqrt{\log{m}}}}$ pairwise disjoint copies of $A$
	in $M$ that are created as above.
	Note that all $A$-copies in $M$ are separated by $\{n/2 \} \times \{n/2 \}$,
	where there are two opposite quarters (with respect to the separation) that do
	not contain the entry $0$ and the two other opposite quarters do not contain
	$1$. Hence, every $A$-copy must contain one entry from each quarter, and $M$ does not contain copies of $B$.
	The main observation is that all of the $A$-copies in $M$ are actually copies
	from $\mathcal{A}$, so $M$ contains exactly $q$ $A$-copies.
	
	To see this, suppose that the rows of an $A$-copy in $M$ are $i$ and $j + n/2$
	for some $1 \leq i,j \leq n/2$, then there exist $x_1, x_2, x_3, x_4 \in X$ such
	that the entries of the copy were taken from locations $(i,i+x_1), (i, m/2 + i +
	3x_2), (m/2 + j, j-x_3), (m/2+j, m/2+j+x_4)$ in $M$ and so we have $i+x_1 =
	j-x_3$ and $i+3x_2 = j + x_4$. Reordering these two equations we get that $3x_2
	= x_1 + x_3 + x_4$, implying that $x_1 = x_2 = x_3 = x_4$ and $j = i+2x_1$, so
	the above $A$-copy is indeed in $\mathcal{A}$.
	
	Let $n$ be an arbitrarily large positive integer divisible by $m$. Given $M$
	as above, we create an $n \times n$ `blowup' matrix $N$ as follows: For any $1
	\leq i,j \leq n$, $N(i,j) = M(\lfloor im/n \rfloor, \lfloor jm / n \rfloor)$.
	$N$ can also be seen as the result of replacing any entry $e$ in $M$ with an
	$n/m \times n/m$ matrix of entries equal to $e$.
	The total number of $A$-copies in $N$ is exactly $(n/m)^4 q = n^4 |X| / 5m^3$,
	whereas the maximum number of pairwise disjoint $A$-copies in $N$ is exactly
	$(n/m)^2 q = n^2 |X| / 5m$. Assuming that $\epsilon > 0$ is small enough and
	picking $m$ to be the smallest integer divisible by $10$ and larger than
	$\epsilon^{c\log{\epsilon}} $ for a suitable absolute constant $c > 0$
	gives that $|X| / 5m > \epsilon$, but the number of $A$-copies in $N$ is at most
	$n^4 |X| / 5m^3 \leq n^4 / m^2 < \epsilon^{-c\log{\epsilon}}  n^4$ as needed. 
\end{proof}

\section{Concluding remarks}
\label{sec:concluding}
Generally, understanding property testing seems to be easier for objects that are highly
symmetric.
A good example of this phenomenon is the problem of testing properties of
(ordered) one-dimensional binary vectors. There are some results on this
subject,
but it is far from being well understood. On the other hand, the binary vector
properties $P$ that are invariant under permutations of the entries (these are
the properties in which for any vector $v$ that satisfies $P$, any permutation
of the entries of $v$ also satisfies $P$) are merely those that depend only on
the length and the Hamming weight of a vector. This makes the task of testing
these properties trivial.

A central example of the symmetry phenomenon is the well investigated subject of
property testing in (unordered) graphs, that considers only properties of functions from $\binom{[n]}{2}$
to $\{0,1\}$ that are invariant under permutations of $\binom{[n]}{2}$ induced by
permutations on $[n]$. That is, if a labeled graph $G$ satisfies some graph
property, then any relabeling of its vertices results in a graph that also
satisfies this property.
Indeed, the proof of the only known general result on testing properties of \emph{ordered graphs} (here 
the functions are generally not invariant under permutations), given in \cite{AlonBenEliezerFischer2017},
is substantially more complicated than the proof of its unordered analogue.
See \cite{Sudan2010} for further discussion on the role of symmetries in
property testing.

In general, matrices (with row and column order) do not have any symmetries. 
Therefore, the above reasoning suggests that proving results on the testability
of matrix properties is likely to be harder than proving similar results on
properties of matrices where only the rows are ordered (such properties are
invariant under permutations of the columns), which might be harder in turn than
proving the same results for properties of matrices without row and column
orders, i.e.\@ bipartite graphs, as these properties are invariant under permutations of both the rows and the columns.

Theorem \ref{thm:disjoint_copies} is a weak removal lemma for binary matrices with
row and column order, while Theorem \ref{thm:AlonFischerNewman1} is an induced
removal lemma for binary matrices without row and column order, and our
generalization of it, Theorem \ref{thm:removal_row_perm}, is an induced removal lemma
for binary matrices with a row order but without a column order. It will be very
interesting to settle Problem \ref{open:open1}, that asks whether a polynomial induced removal
lemma exists for binary matrices with row and column orders.

It will be interesting to expand our knowledge of matrices in higher dimensions 
and of ordered combinatorial objects in general.
Proposition \ref{obs:obs_non_binary} is a non-induced removal
lemma for (multi-dimensional) matrices without row and column orders. It will be interesting to get
results of this type for less symmetric objects, ultimately for ordered multi-dimensional matrices. 
We believe that providing a direct solution (that does not go through Theorem \ref{thm:ABF_matrix_removal_lemma}) for the following seemingly innocent problem is of interest, and might help providing techniques to help settling Problem \ref{open:open_non_binary} in general. 
In what follows, the \emph{height} of a $2 \times 2$ submatrix $S$ in an $n \times n$ matrix $M$ is the difference between the indices of the rows of $S$ in $M$, divided by $n$.
\begin{open}
	Let $A = \left( 
	\begin{array}{cc}
	0 & 1 \\
	2 & 3
	\end{array}
	\right)$
	and suppose that an $n \times n$ matrix contains $\epsilon n^2$ pairwise disjoint copies of
	$A$. Show (without relying on Theorem \ref{thm:ABF_matrix_removal_lemma}) that
	there exists $\delta = \delta(\epsilon)$ such that $M$ contains an $A$-copy with height at least $\delta$.
\end{open}
The three dimensional analogue of this problem is obviously also of interest. Here Theorem \ref{thm:ABF_matrix_removal_lemma} cannot be applied, so currently we do not know whether such a $\delta = \delta(\epsilon)$ that depends only on $\epsilon$ exists. Solving the three-dimensional analogue will settle Problem \ref{open:open_non_binary} when the forbidden hypermatrix has dimensions $2 \times 2 \times 2$, and the techniques might lead to settling Problem \ref{open:open_non_binary} in its most general form.

As a final remark, in the results in which $\delta^{-1}$ is polynomial in
$\epsilon^{-1}$ we have not tried to obtain tight bounds on the dependence, and
it may be interesting to do so.

\end{document}